\documentclass[11pt,a4paper]{amsart}
\usepackage[]{amsmath, amsthm, amsfonts, amssymb, amscd,mathtools}
\usepackage[mathscr]{eucal}
\usepackage{hyperref}
\usepackage[all]{xy}
\usepackage[usenames,dvipsnames]{color}
\usepackage{todonotes}\usepackage{rotating}
\usepackage[shortlabels]{enumitem}
\usepackage{tikz}
\usepackage[top=1.2in,bottom=1.2in,left=1.in,right=1.in]{geometry} 
\setlist[enumerate]{label=\it{(\roman*)},
	ref=\it{(\roman*)}}

\usepackage{dsfont}

\usepackage{blindtext}
\usepackage{subfiles}

\usepackage{stackengine}
\stackMath

\newcommand{\restr}[1]          {\vert_{#1}}
\newcommand{\ra}{\rightarrow}

\newcommand{\C}{\mathds{C}}

\renewcommand{\P}{\mathds{P}}
\newcommand{\Q}{\mathds{Q}}
\newcommand{\R}{\mathds{R}}

\newcommand{\Z}{\mathds{Z}}

\newcommand{\caB}{\mathcal{B}}
\newcommand{\caC}{\mathcal{C}}

\newcommand{\caH}{\mathcal{H}}

\newcommand{\caM}{\mathcal{M}}

\newcommand{\caO}{\mathcal{O}}

\newcommand{\caQ}{\mathcal{Q}}

\newcommand{\caT}{\mathcal{T}}

\newcommand{\caV}{\mathcal{V}}
\newcommand{\caW}{\mathcal{W}}
\newcommand{\caX}{\mathcal{X}}

\newcommand{\caZ}{\mathcal{Z}}

\newcommand{\wt}{\widetilde}

\newcommand{\Pic}{{\rm Pic}}

\newcommand{\fg}{{\mathfrak{g}}}

\DeclareMathOperator {\Gr} {Gr}

\DeclareMathOperator {\Div}{div}

\DeclareMathOperator{\im}{Im}
\DeclareMathOperator {\cl}{cl}
\DeclareMathOperator {\CH}{CH}

\DeclareMathOperator{\Ext}{Ext}

\DeclareMathOperator{\Id}{Id}

\DeclareMathOperator{\GL}{GL}
\DeclareMathOperator{\gl}{\mathfrak{gl}}

\DeclareMathOperator{\Ad}{Ad}

\DeclareMathOperator{\Ht}{ht}

\numberwithin{equation}{section}

\theoremstyle{plain}
\newtheorem{prop}{Proposition}[section]

\newtheorem{cor}[prop]{Corollary}
\newtheorem{lem}[prop]{Lemma}
\newtheorem{thm}[prop]{Theorem}

\newtheorem*{mainthm}{Main Theorem}

\theoremstyle{definition}
\newtheorem{df}[prop]{Definition}

\newtheorem{assumption}[prop]{Assumption}

\newtheorem*{remalph}{Remark}

\theoremstyle{remark}
\newtheorem{rmk}[prop]{Remark}

\newtheorem{ex}[prop]{Example}

\begin{document}
	
	\renewcommand\stackalignment{l}
	
	\title{Genus three Ceresa cycles and limit of archimedean heights}
	
	\author{Souvik~Goswami}
	
	\address{Souvik~Goswami, Independent Researcher, Kolkata, WB, India.} 
	\email{gossouvik@gmail.com}
	
	\author{Irene Spelta}
	\address{Irene Spelta, Institut f\"ur Mathematik, HU Berlin, Unter den Linden 6, 10099 Berlin,
		Germany.}
	\email{irene.spelta@hu-berlin.de}
	
	\newif\ifprivate
	\privatetrue
	\subjclass[2020]{14C25, 14C30, 37P30}
	\thanks{S.~Goswami was partially supported by the Universitat de Barcelona Mar\'ia Zambrano postdoctoral fellowship. I. Spelta is a member of GNSAGA (INdAM). Both authors were partially supported by the Spanish MINECO research project PID2023-147642NB-I00.}
	
	\begin{abstract}
		For a one-parameter variation of biextension mixed Hodge structures, Brosnan and Pearlstein showed that the limit of the asymptotic height of the variation is given by a certain \textit{limit height of the nilpotent orbit}. This limit height depends on the choice of a parameter. In the case of a variation of geometric origin related to Ceresa cycles associated with curves of genus three, after fixing a parameter, we show that this limit height is given by the Deligne splitting of a biextension mixed Hodge structure associated with cycles in the boundary.
	\end{abstract}
	
	\maketitle
	
	
	\section*{Introduction}
	
	The pair $(C,p)$, consisting of a smooth projective complex algebraic curve $C$ together with a marked point $p$, determines an algebraic $1$-cycle on its Jacobian $JC$, known as the \emph{Ceresa cycle}, defined by
	$Z(C) := C - [-1]^*C.$
	By construction, the Ceresa cycle is homologically trivial. However, a fundamental result of Ceresa (\cite{Ceresa}) shows that for a general curve of genus $g \geq 3$, it is not algebraically equivalent to zero. This nontriviality has made the Ceresa cycle a central object in the study of the geometry of curves and their Jacobians. 
	
	Let $\Delta\coloneqq \{t\in \C:~|t|<1\}$ be the open unit disc. Consider a family $\caC\to \Delta$ of curves of fixed genus $g\geq 3$, with smooth and projective general fiber $C_t,~t\neq 0$, and central fiber $C_0$ having nodal singularities. Two functions can naturally be attached to such a family. The first one is a holomorphic function $\nu(t)\coloneqq AJ(Z(C_t))$, known as a normal function and defined using the Abel-Jacobi image of the Ceresa cycle. Both $\nu$ and its derivative, the Griffiths infinitesimal invariant, have been studied widely by several authors (see, for example, \cite{Griff}, \cite{Voisin}, \cite{Hain:Height}, \cite{CollinoPirola},\cite{KRP}, \cite{GGK:10}, and \cite{Hain:25}). The second function is the height, which we introduce more generally. Let $X$ be a smooth complex projective variety, and let $Z$ and $W$ be homologically trivial algebraic cycles of codimensions $p$ and $q$, respectively, intersecting properly and satisfying $\dim X + 1 = p + q$. Under these assumptions, one necessarily has $|Z| \cap |W| = \emptyset$. To such a pair, one can associate a unique real number $\Ht(Z,W)$, called the  \emph {archimedean height}. Hodge theoretically, $\Ht(Z,W)$ measures how far a canonical subquotient $B_{Z,W}$ of the (rational) mixed Hodge structure $H^{2p-1}(X \setminus |Z|, |W|; \Q(p))$ is from being $\R$-split. $B_{Z,W}$ is an example of a \textit{biextension} mixed Hodge structure with graded pieces
	\begin{displaymath}
		\Gr^{W}_{0}B_{Z,W}=\Q(0),~ \Gr^{W}_{-1}B_{Z,W}=H^{2p-1}(X;\Q(p)),~\Gr^{W}_{-2}B_{Z,W}=\Q(1),
	\end{displaymath}
	and $\Gr^{W}_{k}B_{Z,W}=0$ otherwise. Conceptually, we can say that $B_{Z,W}$ `splices' together the Abel-Jacobi class $AJ(Z)$ and the dual $AJ(W)^{\vee}$ of the Abel-Jacobi class of $W$. One can define a metric on $B_{Z,W}$ by $||B_{Z,W}||\coloneqq e^{\Ht(Z,W)}$. Similarly, for an abstract biextension $B$ (see section \ref{sec 3} for the definition) one can attach a height $\Ht(B)$, and define a metric $||B||=e^{\Ht (B)}$.
	
	Using the above definition of metric on biextensions and Ceresa cycles, in \cite{HainReed}, Hain and Reed introduced a naturally metrized line bundle $(\mathcal{B}, \|\cdot\|_{\mathcal{B}})$ on the moduli space $\mathcal{M}_g$ of smooth projective curves of genus $g\geq 3$, called the biextension line bundle. The associated metric encodes the archimedean contribution of the Beilinson height of pairs of Ceresa cycles.
	
	A natural problem is to extend this construction from $\mathcal{M}_g$ to its Deligne--Mumford compactification $\overline{\mathcal{M}}_g$. It can be shown that while the biextension line bundle extends algebraically, the metric does not (see \cite{Lear:90}, \cite{HainReed}, \cite[Section 5]{pearlstein:sl2}). In the language of heights, for a family of biextensions $\{B_t\}_{t\neq 0}$, instead of $\Ht(B_t)$, the asymptotic function $\Ht(B_{t})+\mu\log|t|$ extends continuously to $t=0$, for $\mu\in \Q$ that is controlled by the monodromy around $t=0$. This $\mu$ measures the singular behaviour of the height near the boundary. Then, the above study was generalized for an abstract biextension variation over a more general base in \cite{BP:Jump}. 
	
	While the focus of the existing literature has been the study of $\mu$ and, more generally, the height jump divisor, the aim of our paper is slightly different. In \cite[Definition 70]{BP:Jump}, the authors introduce a \textit{limit height $H(N,F_{\infty},W)$ of the nilpotent orbit}  by (\cite[Theorem 76]{BP:Jump})
	\begin{displaymath}
		\lim_{t\to 0}\Big(\Ht(B_{t})+\mu\log|t|\Big)=H(N,F_{\infty}, W).
	\end{displaymath}
	Much like the limit mixed Hodge structure, the limit height depends on the choice of coordinate, while $\mu$ is independent of it. In this article, we aim to show that in a biextension variation of geometric origin, after fixing a choice of coordinate for $\Delta$, $H(N,F_{\infty},W)$ is `given by a biextension mixed Hodge structure attached to cycles in the boundary'. More precisely, following the approaches of \cite{Collino, Hain:Height, KRP}, we consider the limit of asymptotic heights of Ceresa cycles associated to a special family of genus three curves as the first non-trivial case. We summarize our results as follows.
	
	\begin{mainthm}\label{mainthm}
		Let $\caC\to \Delta$ be a family of genus $g=3$ curves whose fiber $C_t$ is smooth and projective for $t\neq 0$, and whose central fiber $C_0$ is one of the following two types:
		\begin{itemize}
			\item
			$C_0$ is an irreducible nodal curve with a single node $x_0$, such that for a normalization $(\wt C_0, \{p,q\})\to (C_0, \{x_0\})$, the zero cycle $p-q$ is torsion in $J\wt C_0$ (assumption \ref{assum:1.1}).
			\\
			\item
			$C_0=D\cup E$, for two irreducible and smooth components $E$ and $D$ of genus one and two, respectively, with a separating node $y_0=D\cap E$.
		\end{itemize}
		Then the following happens:
		\begin{enumerate}
			\item
			When $C_0$ is irreducible and nodal, assuming \ref{assum:1.1}, there is a choice of coordinate (assumption \ref{assum:2.2}) such that the associated limit mixed Hodge structure $H^{1}_{\lim}(C_t;\Q)$ is split. When $C_0$ is reducible, $H^{1}_{\lim}(C_t;\Q)\cong H^{1}(D;\Q)\oplus H^{1}(E;\Q)$ is automatically split.
			\\
			\item
			There exists a family of Ceresa cycles $\{Z(C_t), W(C_t)\}_{t\in \Delta^{\ast}}$ that intersects properly in $JC_t$, such that the cycles $Z(C_0)$ and $W(C_0)$ also intersect properly in the compactification $X_0$ of the Jacobian $JC_0$, and are homologous to zero (Propositions \ref{prop:4-1} and \ref{prop:5.5}).
			\\
			\item\label{it:3}
			Under assumptions \ref{assum:1.1} and \ref{assum:2.2}, the limit height $H(N,F_{\infty}, W)$ associated with the biextension variation $B_{Z(C_t),W(C_t)}$, is given by the Deligne splitting of the biextension mixed Hodge structure $B_{\wt Z(C_0),\wt W(C_0)}$, where $\wt Z (C_0)$ and $\wt W (C_0)$ are the lifts of $Z(C_0)$ and $W(C_0)$ to the normalization $\wt X_0$ and are homologically trivial.
		\end{enumerate}
	\end{mainthm}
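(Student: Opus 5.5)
We prove the three items in order, treating the irreducible nodal case and the reducible case in parallel.

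\emph{Item (i).} In the irreducible case the weight filtration of $H^1_{\lim}(C_t;\Q)$ has $\Gr^W_0=\Q(0)$, $\Gr^W_1=H^1(\wt C_0;\Q)$ and $\Gr^W_2=\Q(-1)$. The extension class of $\Gr^W_0$ by $\Gr^W_1$ is the Abel--Jacobi class of $p-q$ in $J\wt C_0$. By assumption \ref{assum:1.1} this class is torsion, so rationally it vanishes. The dual extension of $\Gr^W_1$ by $\Gr^W_2$ is the same class up to sign, so it also vanishes. What remains is the ``diagonal'' extension of $\Q(0)$ by $\Q(-1)$. This is a period in $\C/2\pi i\Q$, and it changes by $\log$ of a constant under a rescaling $t\mapsto ct$ of the coordinate. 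Assumption \ref{assum:2.2} is meant precisely to fix the coordinate so that this residual period vanishes, and with that choice the limit mixed Hodge structure is split. In the reducible case the monodromy is trivial, since the vanishing cycle is separating. The limit is then pure and equal to $H^1(C_0)=H^1(D)\oplus H^1(E)$ by the Clemens--Schmid sequence.

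\emph{Item (ii).} We choose base points on $C_t$ that specialize to smooth points of $C_0$, for instance two sections $\sigma,\tau$ of $\caC\to\Delta$. We set $Z(C_t)=C_t^{\sigma}-[-1]^*C_t^{\sigma}$ and take $W(C_t)$ to be a translate of the Ceresa cycle based at $\tau$ by a suitable point. The translate is chosen so that the supports are disjoint for $t\neq 0$; a dimension count in $JC_t$ (curves in a threefold) shows a general translate works. We then take the flat limits in a degeneration $\caX\to\Delta$ of the Jacobians whose central fiber $X_0$ compactifies $JC_0$. In the irreducible case $X_0$ is the $\P^1$-bundle over $J\wt C_0$ with the $0$- and $\infty$-sections glued. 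In the reducible case $X_0=JD\times E$. We check properness of the limits by an explicit description of the limit curves: the image of $C_0$ and its $[-1]$-translate, avoiding the double locus for general choices. Homological triviality of the limits follows from specialization. In the reducible case it also follows from the fact that $C_0$ and $[-1]^*C_0$ have the same class in each Künneth component.

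\emph{Item (iii).} This is the main step. We form the biextension variation $B_t=B_{Z(C_t),W(C_t)}$. Its limit, together with $N$ and $F_\infty$, gives the nilpotent orbit whose limit height $H(N,F_\infty,W)$ is described in \cite[Definition 70]{BP:Jump}. The strategy is to compute the limit mixed Hodge structure of $B_t$ geometrically. Using the weight spectral sequence / Clemens--Schmid for the relative cohomology $H^3(\caX\setminus|Z|,|W|)$ on the semistable model, we identify the graded piece $\Gr^{M}$ of the relative weight filtration with $B_{\wt Z(C_0),\wt W(C_0)}$. This biextension lives on the normalization $\wt X_0$, which is a $\P^1$-bundle over $J\wt C_0$, or $JD\times E$. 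The remaining pieces come from $H^3$ of the double locus and are split by item (i).

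Given this identification, we unwind the definition of $H(N,F_\infty,W)$ as the height of the Deligne $\R$-splitting of the limit biextension relative to $M$. Because the limit $H^1$ is split, the relevant $\delta$ decomposes, and only its component on the $\Gr^M$-piece containing $\wt Z,\wt W$ contributes. That component equals the Deligne splitting datum of $B_{\wt Z(C_0),\wt W(C_0)}$, which gives the claim.

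The expected obstacle is exactly this identification of the limit biextension with $B_{\wt Z(C_0),\wt W(C_0)}$ in a way compatible with $F_\infty$. In particular, we must ensure that no extra contribution enters from extension data between the double-locus terms and the normalization term. The plan is to kill these cross terms using splitness from item (i) and assumption \ref{assum:2.2}, and to handle them by a direct period computation with explicit relative cycles, following \cite{Hain:Height, KRP}.
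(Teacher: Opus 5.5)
Your treatment of (i) matches the paper. Item (iii), however, has a genuine gap at exactly the step you call the expected obstacle.

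\textbf{The identification in (iii) cannot hold as stated.} Each $\Gr^M_k\caV_{\lim}$ is pure, whereas $B_{\wt Z_0,\wt W_0}$ has three weights. Its height is precisely its extension datum, which disappears on passing to $\Gr^M$. Concretely, $\Q(0)$, $H^3(\wt X_0;\Q(2))$ and $\Q(1)$ lie in three different pieces $\Gr^M_0,\Gr^M_{-1},\Gr^M_{-2}$ of $\caV_{\lim}$. They sit next to the double-locus terms $H^2(J\wt C_0;\Q(1))$ and $H^2(J\wt C_0;\Q(2))$, which come from $H^2$, not $H^3$, of the double locus. A weight spectral sequence computation only yields these graded pieces, so by itself it never sees the height.

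Moreover, splitness of $H^1_{\lim}(C_t)$ only kills the component $\delta_{M,0}\colon H^2(J\wt C_0;\R(1))\to H^2(J\wt C_0;\R(2))$. The components $\R(0)\to H^2(J\wt C_0;\R(2))$ and $H^2(J\wt C_0;\R(1))\to\R(1)$ of $\delta_M$ encode data of the cycles and are not controlled by item (i). So the ``cross terms'' are not killed by your argument, and ``a direct period computation'' is a placeholder, not a proof.

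\textbf{The missing idea.} You should avoid computing these terms at all. Instead, realize $B_{\wt Z_0,\wt W_0}$ as the image of a sub-mixed Hodge structure of $\caV_{\lim}$ under a morphism of mixed Hodge structures that respects the canonical generators. Then functoriality of the Deligne splitting (Lemma \ref{lem:2.5}, used as in Proposition \ref{prop:2.13}) does the work. The paper does this as follows.
\begin{enumerate}
\item It builds $H^3(X_0\setminus|Z_0|,|W_0|;\Q(2))$ on the singular central fibre.
\item It maps this into $\caV_{\lim}$ by the relative Clemens--Schmid morphism $\iota$, used as an honest morphism of mixed Hodge structures rather than only on graded pieces.
\item It proves $\iota$ injective, from $H^1(C_0)\hookrightarrow H^1_{\lim}(C_t)$, exterior powers and the five lemma.
\item It sets $f=\nu^*\circ\iota^{-1}$ on $\caQ=\im\iota$; by weights $\nu^*$ kills $H^2(J\wt C_0;\Q(2))$.
\item Then $\delta_M|_{\caQ}=\delta_{\caQ}$ and $f\circ\delta_{\caQ}=\delta_{B_{\wt Z_0,\wt W_0}}\circ f$ isolate the $\R(0)\to\R(1)$ component.
\end{enumerate}

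Your plan also omits two further inputs.
\begin{itemize}
\item[$\bullet$] The purity statement for $(X_0,Z_0)$ (Proposition \ref{prop:4.4}). This identifies the limit generator $1_t$ of $\Q(0)$ with $\{Z_0\}$ and hence with $\{\wt Z_0\}$, and dually for $W$, so the two heights are normalized compatibly.
\item[$\bullet$] Homological triviality of $\wt Z_0,\wt W_0$ on $\wt X_0$. This is part of the statement and is needed for $B_{\wt Z_0,\wt W_0}$ to be a biextension at all. The paper obtains rational triviality from assumption \ref{assum:1.1} via the projective bundle formula (Remark \ref{cicles on norm}).
\end{itemize}

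\textbf{Two smaller issues in (ii).}
\begin{itemize}
\item[$\bullet$] The limit curves always meet the double locus, since the node goes to a boundary point, and translation by a point of $JC_0$ preserves the boundary. Properness at $t=0$ therefore amounts to checking that the boundary points of $Z_0$ and $W_0$ are distinct.
\item[$\bullet$] ``Homological triviality follows from specialization'' would require injectivity of $H^4(X_0)\to H^4_{\lim}(JC_t)$, which you do not justify. The paper instead uses the localization sequence together with $[-1]$-anti-invariance on $H^4(\wt X_0;\Q(2))$.
\end{itemize}
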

	
	We mention that if $C_0$ is nodal and irreducible, there is a nontrivial vanishing cycle and a monodromy operator $N \neq 0$ with $N^2 = 0$ for $H^{1}(C_t;\Q)$. The case where $C_0$ is reducible is simpler, since $N=0$ on $H^{1}(C_t;\Q)$. Thus, in this case, the choice of coordinate is not explicit. Moreover, $JC_0$ is smooth and compact, so no normalization is needed in $(iii)$. 
	
	We now give an idea of the proof when $C_0$ is nodal and irreducible. Our method is inspired by the work in \cite{KRP} and \cite{BP:Jump}, and can be thought of as a bridge between the two. Using assumption \ref{assum:1.1}, we can fix a coordinate in such a way that the mixed Hodge structure $H^{1}_{\lim}(C_t; \Q)$ is split (assumption \ref{assum:2.2}). Then the key step involves finding the limit of the biextension variation
	\begin{displaymath}
		\caV_{t}=H^{3}(JC_t \setminus |Z(C_t)|, |W(C_t)|; \mathbb{Q}(2)).
	\end{displaymath}
	The assumption \ref{assum:2.2} makes this study significantly easier. Nevertheless, the resulting limit mixed Hodge structure $\caV_{\lim}$ no longer has the structure of a biextension. Consequently, the analysis of the limiting height amounts to extracting from $\caV_{\lim}$ the graded pieces of the biextension $H^{3}(\wt X_0\setminus |\wt Z (C_0)|,|\wt W (C_0)|;\Q(2))$. This is the core motivation of our result (see Theorem \ref{thm:4-4}), and is achieved using a purity result for the cohomology of `mildly' singular varieties (see Proposition \ref{prop:4.4}), and the injectivity of the Clemens-Schmidt morphism in our situation. Finally, we conclude by exploiting a relation between heights and certain morphisms of mixed Hodge structures.  (see Proposition \ref{prop:2.13}).
	
	\begin{remalph}
		Main Theorem\ref{it:3} is not merely an equality of two real numbers. The main point is that the limit height $H(N,F_{\infty},W)$ is obtained as a Deligne splitting of a geometrically defined biextension mixed Hodge structure appearing in the boundary. Heuristically, we are studying the interaction between the two mixed Hodge structures $H^{3}_{\lim}(JC_t\setminus |Z(C_t)|, |W(C_t)|;\Q(2))$ and $H^{3}(\wt X_0\setminus |\wt Z(C_0)|, |\wt W(C_0)|;\Q(2))$, which is much deeper than simply the equality of two invariants as real numbers.
	\end{remalph}
	
	The paper is organized as follows. In Section \ref{sec 1}, we review basic material on mixed Hodge structures and on the Jacobian of a uni-nodal curve, and we introduce the two types of degenerations used throughout the paper. In Section \ref{sec 2}, we recall the theory of limit mixed Hodge structures of geometric origin and apply it to analyze the mixed Hodge structure $H^3_{\lim}(JC_t;\Q)$. In Section \ref{sec 3}, we discuss biextensions, both abstractly and in the geometric setting, introduce the notion of height, and recall the variational aspects of \cite{BP:Jump}. Finally, in Section \ref{sec 4}, we study families of Ceresa cycles, establish their properties and prove our main theorem (Theorem \ref{thm:4-4}).

	\section{Preliminaries}\label{sec 1}
	In this section, we gather preliminary results from Hodge theory, nodal curves, and Jacobians.
	\subsection{Mixed Hodge structures}\label{sec:2}
	In this section, we introduce rational mixed Hodge structures and list some properties (mostly without proofs) that we will need. We point out to the reader that working with rational mixed Hodge structures is a matter of convenience, more than anything else. One may as well work with integral structures modulo torsion, and the results of this article will remain valid.
	\begin{df}
		A rational mixed Hodge structure is a triple $$(H, W_{\bullet}, F^\bullet)$$ consisting of a $\Q$-vector space $H$, a finite increasing filtration $W_\bullet$ of $H$ (called the \textit{weight
			filtration}) and a finite decreasing filtration $F^\bullet$ of $H_\C=H\otimes \C$ (called the \textit{Hodge filtration}) such
		that for each $n\in \Z$ the couple $(\Gr_n^W H , \Gr_n^W(F^\bullet))$ is a pure $\Q$-Hodge structure of weight $n$. 
		One can also consider real mixed Hodge structures, where instead
		of a $\Q$-vector space $H_{\Q}$ one has a real vector space
		$H_{\R}$. In fact, 
		given a mixed $\Q$-Hodge
		structure $H$, we will denote $H_{\R}=H_{\Q}\otimes \R$ obtaining an
		$\R$-mixed Hodge structure.
	\end{df}
	When studying variations of mixed Hodge structures, it is convenient to
	fix the underlying vector space and move the filtrations $F$ and $W$. Thus
	if we often fix an ($\R$ or $\Q$) vector space $V$, then a pair of
	filtrations $(F,W)$ on $V\otimes \C$ and $V$ respectively is called a
	mixed Hodge structure if the triple
	\begin{displaymath}
		((V,W),(V\otimes \C,W,F),\Id_{V\otimes \C})
	\end{displaymath}
	is a mixed Hodge structure.

	\begin{ex}\label{ex:2.1}
		For $a\in \Z$, the Tate mixed
		Hodge structure $\Q(a)$ is the mixed Hodge structure given by the
		following data
		\begin{gather*}
			\Q(a)_{\Q}=\Q,\quad W_{-2a-1}\Q(a)_{\Q}=0,\quad
			W_{-2a}\Q(a)_{\Q}=\Q\\
			\Q(a)_{\C}=\C,\quad F^{-a}\Q(a)_{\C}=\C,\quad
			F^{-a+1}\Q(a)_{\C}=0
		\end{gather*}
	\end{ex}
	\begin{df}
		A morphism $f\colon (H, F, W)\to (H', F', W')$ of mixed Hodge structures is a $\Q$-linear map which is compatible with the Hodge and weight filtrations $(F,W)$ and $(F',W')$ respectively. For ease of notation, we will omit the Hodge and weight filtrations and denote a morphism of mixed Hodge structures simply by $f\colon H\to H'$.
	\end{df}
	\subsubsection{Deligne splitting}\label{ss:2.1}
	A mixed Hodge structure $(F,W)$ on $V$ induces a unique functorial bigrading
	\begin{equation}
		V_{\C } = \bigoplus_{a,b}\, I^{a,b}     \label{deligne-bigrading}
	\end{equation}  
	of the underlying complex vector space $V_{\C }$ such that
	\begin{enumerate}
		\item $F^a = \oplus_{\alpha\geq a,\beta}\, I^{\alpha,\beta}$;
		\item $W_k = \oplus_{\alpha+\beta\leq k}\, I^{\alpha,\beta}$;
		\item $\overline{I^{a,b}}
		\equiv I^{b,a} \mod\oplus_{\beta<b,\alpha<a}\, I^{\beta,\alpha}$.
	\end{enumerate}
	The $I^{a,b}$ is given by
	\begin{equation*}
		\label{eq:72}
		I^{a,b}=F^{a}\cap W_{a+b}\cap \left(\overline{F^{b}}\cap W_{a+b} +
		\overline{U^{b-1}_{a+b-2}}\right), 
	\end{equation*}
	where
	\begin{displaymath}
		U^{r}_{s}=\sum_{j\ge 0} F^{r-j}\cap W_{s-j}.
	\end{displaymath}
	A mixed Hodge structure is said to be \textit{split} (over $\R$) if $\bar{I}^{a,b}=I^{b,a}$. 
	\begin{df}
		The bigrading \eqref{deligne-bigrading} will be called the
		\emph{Deligne bigrading} of $(F,W)$. The associated semisimple endomorphism
		$Y = Y_{(F,W)}$ of $V_{\C }$ which acts as multiplication by $p+q$ on
		$I^{p,q}$ will be called the \emph{Deligne grading} of $(F,W)$.
	\end{df}
	An important consequence of the Deligne bigrading is the strictness of a morphism of mixed Hodge structures with respect to the Hodge and the weight filtrations. Moreover, one also has
	\begin{cor}\label{cor:}
		A short exact sequence
		\begin{displaymath}
			0\to H^{'}\to H\to H^{''}\to 0
		\end{displaymath}
		induces a short exact sequence on graded pieces
		\begin{displaymath}
			0\to \Gr^W H^{'}\to \Gr^W H\to \Gr^W H^{''}\to 0.
		\end{displaymath}
	\end{cor}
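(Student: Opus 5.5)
The plan is to prove the corollary directly from the Deligne bigrading of \S\ref{ss:2.1}, using the strictness of morphisms of mixed Hodge structures with respect to $W$. Write $i\colon H'\to H$ and $\pi\colon H\to H''$ for the maps in the sequence. These are morphisms of mixed Hodge structures, so $i(W_kH')\subseteq W_kH$ and $\pi(W_kH)\subseteq W_kH''$. Hence, for every $k$, they induce maps $\Gr^W_k H'\to \Gr^W_k H\to \Gr^W_k H''$. The goal is to show that each such sequence is short exact.

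The first step is strictness. By functoriality of the Deligne bigrading, a morphism $f$ of mixed Hodge structures satisfies $f(I^{a,b})\subseteq I^{a,b}$. Since $W_k=\bigoplus_{a+b\le k}I^{a,b}$ and $f$ preserves the bigrading, $f$ commutes with the projections to the summands, and we get
\[
f(W_k)=f(V_\C)\cap W_k,
\]
first on the complexifications and then on the rational spaces by faithfully flat descent. Applying this to $i$ and $\pi$ gives two facts. First, $i(W_kH')=i(H')\cap W_kH$, so that $W_kH'=i^{-1}(W_kH)$ because $i$ is injective. Second, $\pi(W_kH)=W_kH''$ because $\pi$ is surjective.

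The second step checks exactness at each position of the graded sequence. \emph{Surjectivity} on the right: take $x''\in W_kH''$; it lifts to some $x\in W_kH$ by strictness of $\pi$. \emph{Injectivity} on the left: if $x'\in W_kH'$ and $i(x')\in W_{k-1}H$, then $x'\in i^{-1}(W_{k-1}H)=W_{k-1}H'$. \emph{Exactness in the middle}: suppose $x\in W_kH$ and $\pi(x)\in W_{k-1}H''$. By strictness of $\pi$, choose $y\in W_{k-1}H$ with $\pi(y)=\pi(x)$. Then $x-y\in\ker\pi\cap W_kH = i(H')\cap W_kH = i(W_kH')$, so the class of $x$ in $\Gr^W_kH$ lies in the image of $\Gr^W_kH'$. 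Finally, the composite $\Gr^W_kH'\to\Gr^W_kH''$ vanishes because $\pi\circ i=0$.

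The only nonformal input is the strictness $f(W_k)=\operatorname{Im} f\cap W_k$. This should follow cleanly from the bigrading being functorial and splitting $W$, but it is where the care is needed. In particular, the descent from $\C$ to $\Q$ is immediate here, since both sides of the equality are rational subspaces whose complexifications agree.
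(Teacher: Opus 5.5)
Your proof is correct and follows the route the paper indicates. The paper gives no separate argument: it records the corollary as a consequence of the strictness of morphisms with respect to $W$ coming from the functorial Deligne bigrading. Your proposal supplies exactly those details: strictness via the bigrading, descent from $\C$ to $\Q$, and the diagram chase on each $\Gr^W_k$.
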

	
	Following Cattani, Kaplan, and Schmid (\cite{CKS:dhs}), let
	\begin{equation*}
		\gl(V_{\C})^{a,b} = \{\,\alpha\in \gl(V_{\C})
		\mid \alpha(I^{c,d})\subseteq I^{a+c,b+d}\,\}
		\label{gl-hodge-comp}
	\end{equation*}
	be the Hodge decomposition of $\gl(V_{\C})$: an element $\lambda \in \gl(V_{\C})$ decomposes as $\lambda =\sum
	\lambda ^{a,b}$.  Defining
	\begin{equation}
		\Lambda^{-1,-1} = \bigoplus_{a<0,b<0}\, \gl(V_{\C })^{a,b}, 
		\label{lambda-def}
	\end{equation}
	then we have $\overline{\Lambda^{-1,-1}} = \Lambda^{-1,-1}$. It follows from the properties of the Deligne bigrading that taking $\lambda\in\Lambda^{-1,-1}$ then
	$(e^{\lambda}\cdot F,W)$ is another mixed Hodge structure on $V$ such that
	\begin{displaymath}
		I^{p,q}_{(e^{\lambda}\cdot F,W)} = e^{\lambda}(I^{p,q}_{(F,W)}).
	\end{displaymath} In particular, there exists a unique
	real element $\delta = \delta_{(F,W)}\in\Lambda^{-1,-1}$ such that 
	\begin{equation}
		\overline{Y}_{(F,W)} = e^{-2i\delta}\cdot Y_{(F,W)}  \label{delta-def}
	\end{equation}
	where $g\cdot \alpha \coloneqq \Ad(g)\alpha$ denotes the adjoint action of
	$\GL(V_{\C })$
	on $\gl(V_{\C })$. The element
	$\delta$ defined by \eqref{delta-def}
	will be called the \emph{Deligne delta} of $(F,W)$. It is responsible for turning a non-split mixed Hodge structure into a split one. 
	\begin{rmk}\label{rmk:2.4}
		Let $\delta_{-j}\colon \Gr^W_{i}V_{\C}\to \Gr^{W}_{i-j}V_{\C}$ be the component of $\delta$ that maps the $i$-th graded piece to the $i-j$-th graded piece, for all $i\in \Z$. Then
		\begin{displaymath}
			\text{ad}(Y)(\delta_{-j})(v)=(i-j)\delta_{-j}(v)-i\delta_{-j}(v)=-j\delta_{-j}(v).
		\end{displaymath}
		So $\delta_{-j}$ belongs to the $-j$-th eigenspace of $\text{ad}(Y)$, and we have a decomposition $\delta=\Sigma_{j}\delta_{-j}$ of $\delta$ in terms of these eigencomponents. Notice that, since $\delta\in \Lambda^{-1,-1}$, by \eqref{lambda-def}, $\delta_{-j}=\sum _{a+b=-j}\delta_{-j}^{a,b}$ where $a<0$ and $b<0$. Thus, the smallest possible shift is $j=2$.
	\end{rmk}
	Next, we show that $\delta$ commutes with morphisms of mixed Hodge structures.
	\begin{lem}\label{lem:2.5}
		Let $A$ and $B$ be mixed Hodge structures with Deligne
		splittings $\delta_A$ and $\delta_B$ respectively.  Let $f:A\to B$
		be a morphism of mixed Hodge structures.  Then,
		$f\circ\delta_A = \delta_B\circ f$.
	\end{lem}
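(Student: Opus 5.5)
The plan is to work with the direct sum $A\oplus B$ and the graph of $f$, and to use two ingredients: the functoriality of the Deligne bigrading, and the uniqueness of $\delta$ characterized by \eqref{delta-def}.

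\textbf{Step 1: $f$ preserves the bigrading.} Since $f$ is a morphism of mixed Hodge structures, it is compatible with $F$, with $W$ and (being defined over $\Q$) with complex conjugation. The explicit formula
\[
I^{a,b}=F^{a}\cap W_{a+b}\cap \left(\overline{F^{b}}\cap W_{a+b}+\overline{U^{b-1}_{a+b-2}}\right)
\]
is built only from these operations. Hence $f(I^{a,b}_A)\subseteq I^{a,b}_B$, and so $f\circ Y_A=Y_B\circ f$.

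\textbf{Step 2: pass to the direct sum.} Put $V=A\oplus B$ with the direct sum mixed Hodge structure. Its Deligne bigrading is $I^{a,b}_A\oplus I^{a,b}_B$. By uniqueness of the Deligne delta, $\delta_V=\delta_A\oplus\delta_B$, since $\delta_A\oplus\delta_B$ is real, lies in $\Lambda^{-1,-1}_V$, and satisfies \eqref{delta-def} componentwise.

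Now let $\Gamma=\{(x,f(x))\}\subset V$ be the graph of $f$. It is a sub-mixed Hodge structure, because $f$ is a morphism and the filtrations are strict. By Step 1, $\Gamma_\C=\bigoplus_{a,b}(\Gamma_\C\cap I^{a,b}_V)$, and this decomposition is exactly the Deligne bigrading of $\Gamma$ by functoriality. Consequently $Y_V$ preserves $\Gamma_\C$ and restricts to $Y_\Gamma$. Since $\Gamma$ is defined over $\Q$, $\overline{Y}_V$ also preserves $\Gamma_\C$ and restricts to $\overline{Y}_\Gamma$.

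\textbf{Step 3 (main obstacle): show that $\delta_V$ preserves $\Gamma$.} Once this holds, $\delta_V|_\Gamma$ is real and lies in $\Lambda^{-1,-1}_\Gamma$. Restricting $\overline{Y}_V=e^{-2i\delta_V}\cdot Y_V$ to $\Gamma$ then shows that $\delta_V|_\Gamma$ satisfies \eqref{delta-def} for $\Gamma$, so $\delta_V|_\Gamma=\delta_\Gamma$. Preservation of $\Gamma$ means $(\delta_A x,\delta_B f(x))\in\Gamma$ for all $x$, which is exactly $\delta_B f(x)=f(\delta_A x)$.

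To prove that $\delta_V$ preserves $\Gamma$, the first approach I would try is to use an intrinsic construction of $\delta$ in terms of $(F,W)$ rather than only the implicit equation \eqref{delta-def}. Following \cite{CKS:dhs}, $\delta$ is determined by $\overline{Y}$ and $Y$ through an explicit formula: writing $\overline{Y}=e^{-2i\delta}\cdot Y$ as $\overline{Y}-Y=\sum_k\frac{(-2i)^k}{k!}\operatorname{ad}(\delta)^k(Y)$ and decomposing into $\operatorname{ad}(Y)$-eigencomponents $\delta_{-j}$ as in Remark \ref{rmk:2.4}, one solves recursively in $j$. The leading equation is $\operatorname{ad}(Y)\delta_{-j}=-j\,\delta_{-j}$, so $\delta_{-j}$ is $-\tfrac{1}{-2ij}$ times the weight $-j$ component of $\overline{Y}-Y$, plus polynomial corrections in the lower $\delta_{-j'}$ and $Y$.

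Each step of this recursion uses only $Y$, $\overline{Y}$, brackets and projections onto $\operatorname{ad}(Y)$-eigenspaces. All of these preserve the subalgebra of endomorphisms of $V_\C$ stabilizing $\Gamma_\C$, because $Y$ and $\overline{Y}$ do. By induction on $j$, every $\delta_{-j}$, and hence $\delta_V$, stabilizes $\Gamma_\C$.

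An alternative for Step 3, if the recursion is messy, is to argue by uniqueness on $V$ itself. Let $P$ be the stabilizer of $\Gamma_\C$ in $\GL(V_\C)$; it is defined over $\R$. Then the equation $\overline{Y}=e^{-2i\delta}\cdot Y$ inside $\Lambda^{-1,-1}\cap\operatorname{Lie}(P)$ has a solution by \cite{CKS:dhs} applied within $P$, and uniqueness of $\delta_V$ forces it to lie in $\operatorname{Lie}(P)$.
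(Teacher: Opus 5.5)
Your argument is correct, but it reaches the key point by a different route from the paper. Both proofs pass to $A\oplus B$ and use uniqueness of $\delta$ to get $\delta_{A\oplus B}=\delta_A\oplus\delta_B$. The paper then simply cites \cite[Proposition 2.20]{CKS:dhs}, which says that $\delta_C$ commutes with every morphism of $C$. It applies this to the shear automorphism $g(a,b)=(a,b+f(a))$ of $C=A\oplus B$, and writing out $g\circ\delta_C=\delta_C\circ g$ gives the claim.

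You instead prove directly that $\delta_{A\oplus B}$ stabilizes the graph $\Gamma$. Given the block form of $\delta_{A\oplus B}$, this is the same statement as commuting with $g$. You do it by solving $\overline{Y}=e^{-2i\delta}\cdot Y$ recursively in $\operatorname{ad}(Y)$-eigencomponents. The weight $-j$ part reads $(\overline{Y}-Y)_{-j}=-2ij\,\delta_{-j}+P_j(\delta_{-2},\dots,\delta_{-(j-1)})$, where $P_j$ is a Lie polynomial. Now $Y$ and $\overline{Y}$ lie in the Lie subalgebra $\mathfrak{p}$ stabilizing $\Gamma_\C$, and $\mathfrak{p}$ is preserved by the eigenprojections of $\operatorname{ad}(Y)$. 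So induction puts every $\delta_{-j}$ in $\mathfrak{p}$. This is valid and self-contained, using only the existence of $\delta$. In effect you reprove the part of \cite{CKS:dhs} that the paper quotes; the paper's version buys brevity by citing it.

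Two small remarks. First, the leading coefficient is $\frac{1}{-2ij}$, not $-\frac{1}{-2ij}$; this is harmless. Second, your alternative via the stabilizer group $P$ is too vague as written, since ``applying CKS within $P$'' is not a precise statement.

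A cleaner way to close Step 3 with no recursion at all runs as follows. By your Step 1, $g$ is real and preserves every $I^{a,b}$, so it commutes with $Y$ and with $\overline{Y}$. Hence $g\delta g^{-1}$ is real, lies in $\Lambda^{-1,-1}$, and satisfies \eqref{delta-def}. Uniqueness of the Deligne delta then forces $g\delta g^{-1}=\delta$.
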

	\begin{proof}  By \cite[Proposition 2.20]{CKS:dhs}, if $C$ is a mixed Hodge structure,
		then $\delta_C$ commutes with all $(r,r)$-morphisms of $C$.  Let
		$C=A\oplus B$ and observe that $g(a,b) = (a,b+f(a))$ is a morphism of $C$.
		Using the block structure of $\gl(C)=\gl(A\oplus B)$ it follows immediately
		from \eqref{delta-def} that $\delta_C(a,b) = (\delta_A(a),\delta_B(b))$.
		Writing out the $g\circ\delta_C = \delta_C\circ g$ shows that 
		$f\circ\delta_A=\delta_B\circ f$.
	\end{proof}
	

\subsection{Nodal curves and Jacobians}\label{S1}

Next, we survey the structure of the Jacobian of a nodal curve and discuss the Abel Jacobi maps. 
\subsubsection{Jacobian of a nodal curve}\label{ss-2-1}
Let $C_0$ be a connected irreducible nodal curve of genus $g$ with one node (ordinary double point) $x_{0}$ as a singularity. We define $JC_0$ as the variety of isomorphism classes of line bundles of degree zero on $C_0$, and the tensor product yields on $JC_0$ the structure of a group variety. Let $\pi\colon (\widetilde{C}_0, \{p,q\})\rightarrow (C_0, \{x_{0}\})$ be a normalization of $C_0$, where $\widetilde{C}_0$ is smooth projective curve of genus $g-1$ (opening the node), such that $\pi$ defines an isomorphism $\widetilde{C}_0\setminus \{p,q\}\cong C_0\setminus \{x_{0}\}$. Then, we define the pullback morphism
\begin{displaymath}
	\pi^{\ast}\colon JC_0\rightarrow J\widetilde{C}_0=\Pic^{0}(\widetilde{C}_0).
\end{displaymath}
As it is well-known, this morphism is surjective, and the kernel is given by $\C^{\ast}$. 
Hence, we have an exact sequence which presents $JC_0$ as a semi-abelian variety:
\begin{displaymath}
	0\rightarrow \C^{\ast}\rightarrow JC_0\xrightarrow{\pi^{\ast}} J\widetilde{C}_0\rightarrow 0.
\end{displaymath}
The semi-abelian variety $JC_0$ can be compactified. Indeed, identifying $\Pic^{0}(\widetilde{C}_{0})\cong \Pic^{0}(J\widetilde{C}_{0})$, the line bundle $L:=\caO_{\widetilde{C}_0}(p-q)$ can be viewed as a line bundle over $J\widetilde{C}_{0}$, and $JC_0$ is the total space of $L$ minus the 0-section. Following \cite{Altman Kleiman}, we give a quick description of the compactification: Let $Y_{0}$ and $Y_{\infty}$ be the zero and infinity sections of $\P(\caO_{J\widetilde{C}_{0}}\oplus L)$ obtained through the projections $s_{0}\colon \caO_{J\widetilde{C}_{0}}\oplus L\to\caO_{J\widetilde{C}_{0}}$ and $s_{1}\colon \caO_{J\widetilde{C}_{0}}\oplus L\to L$ respectively. Then, the compactification $X_{0}=\overline{JC_0}$ is obtained as \begin{displaymath}
	X_0=\P(\caO_{J\widetilde{C}_{0}}\oplus L)/Y_{0}\sim Y_{\infty}.
\end{displaymath} 
Notice that the two sections $Y_0$ and $Y_\infty$ do not get
identified fiberwise. Indeed, $L$ being an element in $J\widetilde C_0$, the identification works as \begin{equation}\label{identifiaction sections}
	(D, 0)  \in Y_0 \sim (D+p-q, \infty) \in Y_\infty.
\end{equation} 
As sets we have: \begin{displaymath}
	X_0:=\overline{JC_0}= JC_0\coprod J\widetilde C_0,
\end{displaymath}in other words the singular locus of $X_{0}$ is given by $J\widetilde{C}_{0}$, which is smooth and projective.   Elements in $X_0$ are rank one torsion-free sheaves $M$ of degree 0 whose sections $s$ are allowed to approach the point $x_0$.
The normalization $\widetilde{X}_{0}$ is given by the projective bundle $\P\left(\caO_{J\widetilde{C}_{0}}\oplus L\right)$ over $J\widetilde{C}_{0}$. From \cite[Proposition 5.3]{UBAP:24} and the projective bundle formula for cohomology, the singular cohomology groups of $\wt X_0$ and $X_0$ decompose:
\begin{equation}\label{eq:1.3}
	H^{k}(\wt X_0;\Q)\cong H^{k}(J\wt C_0;\Q)\oplus H^{k-2}(J\wt C_0;\Q(-1)).
\end{equation}
and
\begin{equation}\label{eq:1.2}
	H^{k}(X_0;\Q)\cong H^{k}(\wt X_0;\Q)\oplus H^{k-1}(J\wt C_0;\Q),
\end{equation}
Notice that both are isomorphisms of mixed Hodge structures. The weight $k$ component of $H^{k}(X_0;\Q)$ is given by $H^{k}(\wt X_0;\Q)$, and the $k-1$ component is given by $H^{k-1}(J\wt C_0;\Q)$. However, the decomposition \eqref{eq:1.2} is non-canonical. On the other hand, since $\wt X_{0}$ is smooth and projective, $H^{k}(\wt X_0;\Q)$ carries a pure Hodge structure.
\subsubsection{Abel Jacobi map on stable curves}
It is well-known that, when $C$ is a smooth projective genus $g$ curve, fixing a base point $x$, the Abel Jacobi map is the embedding:
\begin{equation*}\label{AJacobi smooth}
	AJ: C\hookrightarrow JC, c\mapsto \mathcal{O}_C(c-x). 
\end{equation*}
In the last few decades, several authors have considered the problem of defining a similar map in the case of singular curves. The following is borrowed from \cite{Capo et all}. Let $C_0$ be a reduced, reducible, connected curve, 
and for each point $c\in C_0$, let $\mathcal{I}_c$ be its sheaf of ideals, which is torsion-free, rank 1, and of degree -1, and simple if $c\in C_0$ is a non-separating node. Then, if $C_0$ is free from separating nodes, and $x\in C_0$ is a smooth point, 
\begin{equation}\label{Abel non sep}
	AJ: C_0\rightarrow X_0,\;  c\mapsto \mathcal{I}_c\otimes \mathcal{O}_{C_0}(x)
\end{equation}
is the \textit{degree 0 Abel map} of $C_0$ with base point $x$. Furthermore, if $C_0\neq \mathbb{P}^1$, then \eqref{Abel non sep} is an embedding. When $c$ is a smooth point, $\mathcal{I}_c=\caO_{C_0}(-c)$, but, if $c$ is a node, then $\mathcal{I}_c$ is no longer locally free, and thus $AJ(c)$ is a boundary point in $\partial X_0$. 

On the other hand, if $C_0$ has a separating node $y_0$, the map \eqref{Abel non sep} has to be corrected, since multidegrees appear. In this case, $X_0$ parametrizes isomorphism classes of line bundles having degree 0 on every irreducible component. Furthermore, for each tail $Z$ of $C_0$, we consider the line bundles $\caO_{C_0}(Z)$ which restricts to $Z$ as $\caO_Z(-y_0)$, while on the complementary curve $Z'$ as $\caO_{Z'}(y_0)$. These are usually called \textit{twisters}. Then for $c\in C_0$ such that $c\neq y_0$, the sheaf $\mathcal{I}_c$ is, as before, the sheaf of ideals of $c$, while $\mathcal{I}_{y_0}$ is the unique (simple) line bundle such that
\begin{displaymath}
	\mathcal{I}_{y_0}\restr Z\cong \caO_Z(-y_0) \quad \mathcal{I}_c\restr{Z'}\cong \caO_{Z'},
\end{displaymath}
where $Z$ is the $x$-tail not containing $x$, and $Z'$ the complementary. Hence, the \textit{degree 0 twisted Abel map} of $C_0$ with base point $x$ is defined as
\begin{equation}\label{Abel sep}
	AJ: C\rightarrow X_0,\;  c\mapsto \mathcal{I}_c\otimes \mathcal{O}_{C_0}(x)\otimes \caO_{C_0}(-\sum_{Z\in \caT_x(X)}Z),
\end{equation}
where $\caT_x(X)$ is the set of all $x$-tails.

\subsubsection{Degenerations}\label{degenerations} As is customary, let $\overline{\mathcal{M}}_g$ be the moduli space of stable genus $g$ algebraic curves. The complement $\overline{\mathcal{M}}_g\smallsetminus \caM_g$ is a divisor $D$ which is the union of $[g/2] +1$ irreducible components $D=D_0\cup D_1\cup \ldots\cup D_{[g/2]}$. The general point of $D_0$ is an irreducible
stable curve of genus $g$ with one node, while the general point of $D_i$ for $i > 0$ consists of a smooth curve of genus $i$ joined at one point to a smooth curve of genus $g- i$. In this article, we will focus on $g=3$ and look at the general element in the boundary divisors. The central fibre $C_0$ of our family will be of the following two types.
\begin{itemize}
	\item[1.]\label{itm 1} An irreducible curve $C_0$ with one node at $x_0\in C_{0}$. Moreover, we work under the following assumption.
	\begin{assumption}\label{assum:1.1}
		Let $(\widetilde{C}_0, \{p,q\})\to (C_0, \{x_0\})$ be a normalization of $C_0$. We assume that the zero cycle $p-q$ is torsion in $J\wt C_0$.
	\end{assumption}
	We remark here that even though this torsion condition is not very natural in general, the example we have in the back of our mind is the degeneration of a family of Ceresa cycles associated to a family of genus three curves, to the Collino cycle, as constructed in \cite[Section (1.2)]{Collino}.
	\\
	
	\item[2.]\label{itm 2} A reducible curve $C_0= C_1\cup C_2$ obtained as the union of two irreducible smooth components $C_j$ of genus $1$ and $2$ respectively, with a separating node $\{y_0\}=C_{1}\cap C_{2}$. Let $q_i \in C_i, i=1,2$ lying above $y_0$. In this case $X_0\cong JC_1\times JC_2.$
\end{itemize}
In particular,  \eqref{Abel non sep} can be seen as $AJ: C_0\smallsetminus x_0\rightarrow JC_0, \; c\mapsto \caO_{C_0}(c-x)$ extended to the node via the compactification $X_0$. On the other hand, \eqref{Abel sep} works as follows:
\begin{align}\label{cycle X}
	q\in C_1&\mapsto (\caO_{C_1}(q-q_1), \caO_{C_2}(q_2-x))\notag\\
	q\in C_2&\mapsto (\caO_{C_1}, \caO_{C_2}(q-x))\\
	y_0&\mapsto (\caO_{C_1}, \caO_{C_2}(q_2-x)).\notag
\end{align}
Thus, the image of $C_1$ in $JC_1$ is the Abel Jacobi curve with base point $q_1$, while the image of $C_2$ in $JC_2$ is the Abel Jacobi curve with base point $x$. Notice that, when the base point $x$ belongs to $C_1$, \eqref{Abel sep} reads as:
\begin{align}\label{cycle X, punto base su C_1}
	q\in C_1&\mapsto (\caO_{C_1}(x-q), \caO_{C_2})\notag\\
	q\in C_2&\mapsto (\caO_{C_1}(x-q_1), \caO_{C_2}(q-q_2))\\
	y_0&\mapsto (\caO_{C_1}(x-q_1), \caO_{C_2}).\notag
\end{align}

\section{Degeneration of curves and their Jacobians}\label{sec 2}

In this section, we discuss degenerations of mixed Hodge structures of geometric origin. In particular, we focus on $H^{1}(C_t;\Q)$ for families of smooth projective curves $C_t$, and we discuss the mixed Hodge structure we obtain in the limit of our families. We fix the genus at $g=3$ to maintain parity with our main discussion, noting that similar results hold for general genera.
\subsection{Survey of general degenerations of geometric origin}\label{ss:3-1}
Let us set $\Delta\coloneqq \{z\in \C;~\vert z\vert <1\}$ and $\Delta^{\ast}\coloneqq \Delta\setminus \{0\}$. Further, let $f: \caX\rightarrow \Delta$ be a semistable degeneration with central fibre $X_0$ which is a complete simple normal crossing divisor. Let the generic fibre $X_t, t \in \Delta^{\ast}$ be smooth and of projective dimension $n$. By Deligne, once $X_0$ is a complete simple normal crossing variety, the $H:=H^*(X_0;\Q)$ can be equipped with a natural mixed Hodge structure coming from the Hodge structures provided by its normalization and by its irreducible components.  On the other hand, Schmid constructs a mixed Hodge structure on $X_0$ using the information encoded in its smoothing. We now recall the main facts (see \cite{schmid}).

Let $t_0$ be a smooth point, and $T: H^* (X_{t_0}; \Q)\ra H^* (X_{t_0}; \Q) $ the Picard-Lefschetz transformation of the family.  By Borel's theorem, $T$ is a quasi-unipotent operator. Let $0<m\in \Z$ be the smallest positive integer such that $T^{m}$ is unipotent. We define $N:=\frac{1}{m}\log T^m$ as the nilpotent logarithm of monodromy. By Landman's theorem (\cite{Lan:73}), we have $N^{n+1}=0$. 
Let $\Gamma$ be the monodromy group of the family, \begin{displaymath}
	\phi: \Delta^*\ra \Gamma \backslash D\quad \text{and   } \widetilde \phi: \caH\ra D
\end{displaymath}
be respectively the period map (over $\Delta^*$) and a lift to its universal covers. Then the function \begin{displaymath}
	\psi(z):= e^{-zN}F(z),
\end{displaymath}
satisfies the periodicity conditions \begin{displaymath}
	\psi(z+1)=\psi(z),
\end{displaymath}
and hence descends to a well-defined function $\psi(s)$ on $\Delta^*$ via the covering map $\caH\to \Delta^*, z\mapsto s=e^{2\pi iz}.$ Then, Schmid's Nilpotent orbit theorem assert that $\psi$ extends holomorphically over $0\in \Delta$, and the nilpotent orbit $\theta(z):=e^{zN}\cdot F_\infty$, with $F_\infty:=\psi(0)$, asymptotically  approximates $ \phi$. Moreover, the nilpotent orbit  yields a limiting mixed Hodge structure as the data of the Hodge structure $$(H_\Q, F_\infty, W(N)),$$ where $W(N)_{*}$ is the  \textit{monodromy weight filtration} defined as the unique rational increasing filtration such that \begin{displaymath}
	N(W(N)_l)\subset W(N)_{l-2}, \forall , l\quad \text{and}\,  N^a: \Gr^{W(N)}_a\xrightarrow{\cong} \Gr^{W(N)}_{-a} , \forall\, a\geq 0.
\end{displaymath}
In other words, the family $\phi(t)$ of Hodge structure on $H^n(X_{t}, \Q)$ degenerates to the mixed Hodge structure $(F_\infty, W(N))$ as $t\ra 0$. Notice that $F_\infty$ depends on the choice of the coordinate, while $N, \theta(z)$ are independent of it.

In the case of a degeneration of mixed Hodge structure, the picture is more complicated since the limit mixed Hodge structure exists if the variation is \textit{admissible}, i.e. the Hodge filtration $F^\bullet_t$ extends, and the \textit{relative weight filtration} $M:=M(N, W)$ exists, where $M$ satisfies \begin{displaymath}
	N(M_l)\subset M_{l-2}, \forall , l\quad \text{and}\,  N^l: \Gr^{M}_{l+k}\xrightarrow{\cong} \Gr^{M}_{k-l} , \forall\, k,l\geq 0.
\end{displaymath}
Every geometric variation is always admissible, and so, by uniqueness, $W(N)$ and $M$ coincide (up to a shift). Finally, we recall that for such a limit MHS, the operator $N$ becomes a $(-1,-1)$-morphism. 

Deligne and Schmid's results are connected by the Clemens-Schmid exact sequence, which works as follows.
\begin{thm}\label{thm:3.1}
	Let $\caX\ra \Delta$ be a semistable degeneration. 
	\begin{enumerate}
		\item There exists a retraction $\caX\ra X_0$ which provides $H^n(\caX, \Q)\cong H^n(X_0, \Q) $ with a mixed Hodge structure. 
		\item There is the exact sequence of morphisms of mixed Hodge structures \[\ldots H_{2d+2-n}(\caX; \Q)\xrightarrow{\alpha}H^n(\caX; \Q)\xrightarrow{i^*}H^n(\caX_{t_0}; \Q)\xrightarrow{N}H^n(\caX_{t_0}; \Q)\xrightarrow{\beta}H_{2d-n}(\caX; \Q)\xrightarrow{\alpha}\ldots. \]
	\end{enumerate}
\end{thm}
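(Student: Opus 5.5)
We will follow the classical topological route of Clemens, with Deligne's and Schmid's theory as black boxes. The map $i^*$ in Theorem \ref{thm:3.1} is the restriction to a smooth fibre. The map $N$ is the logarithm of the unipotent monodromy; we may assume $T$ is unipotent after a base change $t\mapsto t^m$ and a semistable reduction. The maps $\alpha$ and $\beta$ are built from Poincaré duality on the manifold with boundary $\caX$, which has real dimension $2d+2$ with $d=n$ the fibre dimension.

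For \emph{(i)}, we first shrink $\Delta$. We then invoke Clemens' construction of a deformation retraction $r\colon \caX\to X_0$, using that $X_0$ is a simple normal crossing divisor and hence admits a regular neighbourhood. Since the inclusion $X_0\hookrightarrow \caX$ is a homotopy equivalence, $H^n(\caX;\Q)\cong H^n(X_0;\Q)$. Transporting Deligne's mixed Hodge structure on $X_0$, computed from the simplicial resolution by the strata $X_0^{[k]}$, gives the mixed Hodge structure on $H^n(\caX;\Q)$. Dually, $H_k(\caX;\Q)\cong H_k(X_0;\Q)$ receives the dual mixed Hodge structure, Tate-twisted by $\Q(-d-1)$.

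For the exactness in \emph{(ii)}, we would proceed topologically. Let $\caX^*=\caX\setminus X_0$. It fibres over $\Delta^*$, so it is homotopy equivalent to a mapping torus of $T$ on $X_{t_0}$. The Wang sequence
\[
\cdots\to H^n(\caX^*)\to H^n(X_{t_0})\xrightarrow{T-1}H^n(X_{t_0})\to H^{n+1}(\caX^*)\to\cdots
\]
combines with the long exact sequence of the pair $(\caX,\caX^*)$. Here
\[
H^k(\caX,\caX^*)\cong H_{2d+2-k}(X_0)\cong H_{2d+2-k}(\caX)
\]
by Lefschetz duality, and this yields the sequence with $T-1$ in place of $N$. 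We then replace $T-1$ by $N$. Since $T$ is unipotent, $N=(T-1)\cdot u$ with $u$ an invertible polynomial in $T-1$, so the kernel and image are unchanged. We identify $\alpha$ with the composite $H_{2d+2-n}(\caX)\cong H^n(\caX,\caX^*)\to H^n(\caX)$, and $\beta$ with the Wang coboundary followed by duality. One point remains: exactness at $H^n(\caX)$ and at $H_{2d-n}(\caX)$, that is, gluing the Wang and pair sequences into one sequence. This uses the local invariant cycle theorem in the form $\ker N=\operatorname{im}(i^*)$, which is the genuinely Hodge-theoretic input.

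The main obstacle is showing that all maps are morphisms of mixed Hodge structures and that exactness holds at the two places above. For the first, we use Steenbrink's relative logarithmic de Rham complex $\Omega^\bullet_{\caX/\Delta}(\log X_0)\otimes\caO_{X_0}$. This computes $H^n_{\lim}$ with $F_\infty$ and $W(N)$, and by Steenbrink's theorem it agrees with Schmid's limit. On this complex, $i^*$ is induced by a map of cohomological mixed Hodge complexes, and $N$ is a $(-1,-1)$-morphism. The maps $\alpha$ and $\beta$ are Gysin and duality maps, hence morphisms after the stated Tate twists. Exactness at the remaining spots then follows from strictness and the weight argument. Comparing graded pieces via Corollary \ref{cor:} and the monodromy weight filtration, $\ker N$ has weights $\le n$ on $H^n_{\lim}$. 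This matches the weights of $H^n(X_0)$, and the semisimplicity of $\Gr^W$ forces $\operatorname{im} i^*=\ker N$. We expect this weight comparison to be the delicate step, since it relies on polarizability of the graded pieces.
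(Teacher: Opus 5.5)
The paper gives no proof of Theorem~\ref{thm:3.1}. It simply quotes the classical Clemens--Schmid sequence, so your sketch has to stand on its own, and as written it has a gap at the Hodge-theoretic core. Your topological set-up is fine: the retraction, $H^k(\caX,\caX^*)\cong H_{2d+2-k}(X_0)$, the Wang sequence, and replacing $T-1$ by $N$. But splicing the Wang sequence with the sequence of the pair only yields a \emph{complex}. At every position the inclusion ``image $\subseteq$ kernel'' is topological, and the reverse inclusion is not.

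You also misassign the inputs. Exactness at $H^n(\caX)$, i.e.\ $\ker i^*\subseteq\im\alpha$, says that a class of $H^n(\caX)$ vanishing on $X_{t_0}$ already vanishes on $\caX^*$. Equivalently, the image of $H^n(\caX)\to H^n(\caX^*)$ meets the image of the Wang coboundary $H^{n-1}(X_{t_0})\to H^n(\caX^*)$ trivially. The same kind of statement one degree up gives $\ker\beta\subseteq\im N$, which you implicitly treat as topological. These are injectivity statements and do not follow from the surjectivity statement $\ker N=\im i^*$ in degree $n$. They follow from the local invariant cycle theorem in complementary degrees only after dualizing the whole sequence, or from a weight separation. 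Only exactness at $H_{2d-n}$ is a formal consequence of the local invariant cycle theorem (in degree $n+1$) plus topology.

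The decisive gap is your proof of $\im i^*=\ker N$. Knowing that $\ker N\subseteq M_n$ and that $H^n(X_0)$ has weights $\le n$ does not force $i^*$ onto $\ker N$; the zero map obeys the same weight bounds. Semisimplicity of polarizable $\Gr^W$ supplies no dimension count either. The weight argument that works compares $\ker N$ with the \emph{obstruction} group. For that you need Steenbrink's mixed Hodge structure on $H^\bullet(\caX^*)$, which is not covered by Deligne's theory since $\caX^*$ is not algebraic, making both the Wang sequence and the sequence of the pair exact sequences of MHS. The argument then runs as follows.
\begin{enumerate}
\item The Wang sequence gives $0\to\operatorname{coker}(N\mid H^{n-1}_{\lim})(-1)\to H^n(\caX^*)\to\ker(N\mid H^n_{\lim})\to 0$. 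The left term has weights $\ge n+1$, since $\operatorname{coker}N$ on $H^{n-1}_{\lim}$ has weights $\ge n-1$ because $M=W(N)$. Hence $W_nH^n(\caX^*)\cong\ker N$.
\item Since $H^{n+1}(\caX,\caX^*)\cong H_{2d+1-n}(X_0)(-d-1)$ has weights $\ge n+1$, strictness puts $W_nH^n(\caX^*)$ inside the image of $H^n(\caX)$. This is the local invariant cycle theorem.
\item The same bounds give the two injectivity statements above: the image of $H^n(\caX)$ has weights $\le n$ against a Wang image of weights $\ge n+1$, and one degree up, $\le n+1$ against $\ge n+2$.
\end{enumerate}
Alternatively, cite the local invariant cycle theorem as a black box and derive the rest from topology and the self-duality of the sequence. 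Either way, the missing ingredients are the MHS on the punctured neighbourhood and the weight bound on $H^\bullet_{X_0}(\caX)$, not polarizability of $\Gr^W$.
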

As a consequence, cocycles invariant under monodromy, i.e. cocycles in $\ker N$, are cocycles on the special fibre. Furthermore, $N^{n+1}=0$ always and $N^n=0$ if and only if $H^n(\vert \Gamma(X_0)\vert )=0$, where $\vert \Gamma(X_0)\vert $ is the support of the dual graph of $X_0$.

\subsection{Degeneration of smooth projective curves and MHS}
Given a degeneration as in subsection \ref{degenerations}, we now study the limit mixed Hodge structure $H^{k}_{lim}(JC_t;\Q)$. For the sake of completeness, we start from the structure of $H^{1}_{lim}(C_t;\Q)$, even if it is well-known in the literature (for example, see \cite{Carlson}).

\subsubsection{The case of central irreducible nodal curve}
We consider the first degeneration described in subsection \ref{degenerations}: Here, the central fibre over $0\in \Delta$ is an irreducible nodal curve $C_0$, the node $x_0$ is obtained by identifying two points $p,q$ in the normalization $\widetilde C_0$. Furthermore, the cycle $p-q$ is torsion. The natural variation of Hodge structure, whose fibre over $t\in \Delta$ is provided by $H^1(C_t, \mathbb{Q})$, has a limit MHS, as we now recall. Let $H_1(C_t, \mathbb{Z})=\langle\alpha_1,\alpha_2, \alpha_3, \beta_1, \beta_2, \beta_3\rangle$ be a symplectic basis (similarly $\langle\alpha^*_1,\alpha^*_2, \alpha^*_3, \beta^*_1, \beta^*_2, \beta^*_3\rangle$ the dual basis in cohomology), and $\alpha_3$ the vanishing cycle of the degeneration. Then, since $N^2=0$, the monodromy weight filtration has 3 weights and 
\begin{gather*}
	M_0(H^1_{lim}(C_t; \Q))=\operatorname{Im} N\\
	M_1(H^1_{lim}(C_t; \Q))=\operatorname{ker} N=H^1(C_0; \Q)\\
	M_2(H^1_{lim}(C_t; \Q))=H^1_{lim}, 
\end{gather*}
where the equality in $M_1$ follows from Theorem \ref{thm:3.1}, since up to a base change, our family admits a semistable degeneration. We want the mixed Hodge structure on $H^1_{lim}(C_t; \Q)$ to be split, namely we want $H^1_{lim}(C_t; \Q)=\oplus_{k} \operatorname{Gr}^M_k H^1_{lim}(C_t; \Q)$, where
\begin{displaymath}
	\operatorname{Gr}^M_0H^1_{lim}(C_t; \Q)=\Q\beta_g^*,\;  V=\operatorname{Gr}^M_1H^1_{lim}(C_t; \Q)=H^1(\wt C_0; \Q),\; \text{and} \operatorname{Gr}^M_2H^1_{lim}(C_t; \Q)=\Q(-1)\alpha_g^*.
\end{displaymath}
Indeed, the extension of $\operatorname{Gr}^M_1$ by $\operatorname{Gr}^M_0$ belongs to $\operatorname{Ext^1}( \mathbb{Q}, H^1(\widetilde C_0;\Q))\cong J\widetilde C_0\otimes \Q$, and it is given by the class of $p-q$. Since by assumption \ref{assum:1.1} $p-q$ is torsion, it is zero in $J\wt C_0\otimes \Q$. The same (dually) happens to the extensions of $\operatorname{Gr}^M_2$ by $\operatorname{Gr}^M_1$. Finally, the extension of $\operatorname{Gr}^M_2$ by $\operatorname{Gr}^M_0$ is an element $\xi\in \Ext^1(\Q(-1),\Q)\cong \C^*\otimes \Q$. One can choose a coordinate for $\Delta$ to make this extension trivial: Let $s=ft$ be a new choice of coordinate. Then $\log s=\log (f)+\log t$, and $F_{s,\infty}=e^{-\frac{1}{2\pi i}\log (f(0))}F_{t,\infty}$. Let $c=\frac{1}{2\pi i}\log f(0)$. This new choice of coordinate changes $\xi$ to $\xi-c$. Now, we can choose a coordinate such that $c=\xi$, which makes the above extension trivial.
\begin{assumption}\label{assum:2.2}
	We assume that the parameter $t\in \Delta$ is such that the extension $\xi\in \Ext^1(\Q(-1),\Q)$ is trivial.   
\end{assumption}

With the above assumption, we obtain \[H^1_{lim}(C_t; \Q)= \mathbb{Q}\oplus H^1(\widetilde C_0;\mathbb{Q})\oplus\mathbb{Q}(-1) \]
as a split mixed Hodge structure. Now, using the fact that $H^{k}(JC_t; \Q)\cong \wedge^{k} H^{1}(C_t; \Q)$, we compute $H^k_{lim}(JC_t: \mathbb{Q})$, by taking the exterior power $\wedge^k H^1_{lim}(JC_t; \mathbb{Q})\cong \wedge^k H^1_{lim}(C_t; \mathbb{Q})$. We keep calling the nilpotent logarithm of monodromy $\wedge^{k}N$ of the variation $H^{k}(JC_t; \Q)$ by $N$, satisfying $N^2=0$.

It follows that the limit MHS $H^k_{lim}(JC_t; \mathbb{Q})$ has 3 weights with respect to relative weight filtration $M$, with graded pieces:
\begin{gather*}
	\operatorname{Gr}^M_{k-1}H^k_{lim}(JC_t; \mathbb{Q})= H^{k-1}(J\widetilde C_0,\Q ),\\
	\operatorname{Gr}^M_{k}H^k_{lim}(JC_t; \mathbb{Q})= H^{k-2}(J\widetilde C_0;\Q(-1) )\oplus H^{k}(J\widetilde C_0;\Q))\cong H^k(\wt X_0; \Q),\\
	\operatorname{Gr}^M_{k+1}H^k_{lim}(JC_t; \mathbb{Q})= H^{k-1}(J\widetilde C_0;\Q(-1)).
\end{gather*}
The isomorphism in the middle follows from the projective bundle formula, noting that $\wt X_0$ is $\mathbb{P}^1$-bundle over $J\wt C_0$.

\subsubsection{The case of central reducible curve}
We now consider the second degeneration in \ref{degenerations}: Here the central fibre over $0\in \Delta$ is a reducible curve $C_0 = C_1 \cup C_2$ obtained as the union of two irreducible smooth components $C_1$ and $C_2$ of genera $1$ and $2$ respectively, with a separating node ${y_0} = C_1 \cap C_2$. As before, we first consider the VHS provided by $H^1(C_t; \Q)$, for $t\in \Delta^*$. In this case, $N=0$ and the limit MHS trivially is as follows: 
\begin{gather*}
	M_0(H^1_{lim}(C_t; \Q))=\operatorname{Im} N=\{e\}\\
	M_1(H^1_{lim}(C_t; \Q))=\operatorname{ker} N=H^1(C_0; \Q)=H^1(C_1; \Q)\oplus H^1(C_2; \Q). 
\end{gather*}
Therefore, the equality \begin{align*}
	H^k_{\lim}(JC_t;\mathbb{Q}) &=\oplus_{r+s=k}\wedge^rH^1(C_1; \Q)\otimes\wedge^sH^1(C_2; \Q)\\
	&=H^k(JC_2; \Q)\oplus H^1(JC_1; \Q)\otimes H^{k-1}(JC_2; \Q) \oplus H^{k-2}(JC_2; \Q(-1))\\
	&\cong H^{k}(J(C_1)\times J(C_2); \Q),
\end{align*}
provides $H^k_{\lim}(JC_t; \mathbb{Q})$ with a pure Hodge structure of weight $k$.

\begin{rmk}\label{rmk:2.2.2}
	Since $N=0$, this is the case for no degeneration, and we technically do not need assumption \ref{assum:2.2}. However, we work with it to bring both the degenerations under a common set up.
\end{rmk}

\section{Biextension mixed Hodge structure and its variation}\label{sec 3}
Here, we introduce the mixed Hodge structure that is the main focus of the article.
\begin{df}
	Let $H$ be a rational pure Hodge structure of weight $-1$. A \emph{biextension} $B_H$ associated with $H$ is a mixed Hodge structure that has at most three non
	trivial graded pieces, satisfying
	\begin{displaymath}
		\Gr_{k}^{W}(B_H)=
		\begin{cases}
			\Q(0),&\text{ if }k=0,\\
			H,&\text{ if }k=-1,\\
			\Q(1),&\text{ if }k=-2,\\
			0,&\text{ otherwise.}
		\end{cases}
	\end{displaymath}
\end{df}
The set $\caB_{H}$ of all biextensions associated with $H$ is a $\C^{\ast}\otimes \Q$ bundle over $J(H)\times J(H)^{\vee}$, where $J(H)=\Ext^{1}(\Q(0), H)$ is the rational Jacobian associated to $H$ and $J(H)^{\vee}=J(H^{\vee})=\Ext^{1}(H^{\vee}, \Q(1))$ is its dual.
Since $J(H)\otimes\R=J(H)^{\vee}\otimes \R=0$, one can identify
\begin{displaymath}
	\caB_{H}\otimes \R\cong \Ext^{1}(\R(0),\R(1))\cong \R. 
\end{displaymath}
Hence, to any biextension $B_{H}$, one can attach a unique real number $\Ht(B_{H})$ called the (archimedean) \textit{height} of $B_{H}$. Notice that (cf. Remark \ref{rmk:2.4}) for a biextension mixed Hodge structure $\delta=\delta_{-2}=\delta^{-1,-1}$ with respect to $\text{ad}(Y)$, since there are no eigencomponents of degree $0$ or $-1$. Then it is easy to see that 
\begin{thm}\label{height-bextn}
	Given a biextension $B_{H}$ associated to a pure Hodge structure $H$ of weight $-1$, let $1_{B_{H}}\in \Gr^{W}_{0}B_{H}=\Q(0)$ and $1^{\vee}_{B_{H}}\in \Gr^{W}_{-2}B_{H}=\Q(1)$ be the canonical (Betti) generators. Then
	\begin{displaymath}
		\delta_{-2}(1_{B_{H}})=\frac{1}{2\pi}\Ht(B_{H})1^{\vee}_{B_{H}}.
	\end{displaymath}
\end{thm}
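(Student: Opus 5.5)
We will reduce the statement to a direct computation with the Deligne bigrading of $B_H$ and the defining relation $\overline{Y}=e^{-2i\delta}\cdot Y$. First, as noted before the statement, $\delta=\delta_{-2}=\delta^{-1,-1}$. Indeed, $\delta\in\Lambda^{-1,-1}$ lowers weight by at least $2$, and $B_H$ has weights only $0,-1,-2$. Hence $\delta$ is a real map sending $\Gr^W_0$ to $\Gr^W_{-2}$, and it vanishes on $W_{-1}$. In particular $\delta^2=0$, $\delta$ commutes with everything of weight-degree $\le 0$ that preserves $W$, and $\delta(1_{B_H})=r\cdot 1^{\vee}_{B_H}$ for some $r$. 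Since $\delta$ is real and $1_{B_H}$ is rational while $1^\vee_{B_H}=2\pi i\in\Q(1)$ is purely imaginary in the usual normalization, $r\in i\R$ up to the convention for the generator. So the content of the theorem is to identify this number with $\frac{1}{2\pi}\Ht(B_H)$.

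Next we compute $\delta$ explicitly. Choose the $I^{0,0}$-lift $e_0\in I^{0,0}$ of $1_{B_H}$, and $e_{-2}=1^\vee_{B_H}\in W_{-2}=I^{-1,-1}$. Using $\overline{Y}=e^{-2i\delta}Ye^{2i\delta}=Y-2i[\delta,Y]=Y-4i\delta$ (because $[Y,\delta]=-2\delta$ and $\delta^2=0$), we get $\overline{Y}e_0=-4i\,\delta(e_0)$. On the other hand, $\overline{Y}$ is the grading attached to the conjugate bigrading $\overline{I^{p,q}}$. Thus $\overline{Y}e_0=Y(\overline{e_0})$ conjugated appropriately, which gives $\overline{Y}e_0=-2(\overline{e_0}-e_0)_{-2}$-type expression. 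Concretely, write $\overline{e_0}-e_0\in W_{-1}$. We first kill the weight $-1$ component, using that $H$ is pure and $J(H)\otimes\R=0$, so that a real lift can be taken modulo $W_{-2}$. The remaining difference is then $(\overline{e_0}-e_0)=c\,e_{-2}$, and one finds $\delta(e_0)=\frac{i}{2}\cdot\frac{c}{2}\cdots$. We will carry out this short linear algebra carefully to pin down the constant.

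Finally, we compare with the definition of $\Ht(B_H)$ via $\caB_H\otimes\R\cong\Ext^1(\R(0),\R(1))\cong\R$. Here $\Ext^1(\R(0),\R(1))=\C/\R(1)\cong\R$, so the class is the real part of the period $\langle e_0\rangle$ measured against the generator $1^\vee$. That real part equals $\mathrm{Im}$ of the coefficient $c$ up to the factor $2\pi$ coming from $1^\vee=2\pi i$. Matching the two computations gives $\delta_{-2}(1_{B_H})=\frac{1}{2\pi}\Ht(B_H)1^\vee_{B_H}$. The main obstacle is bookkeeping of normalizations: the factor $2\pi$ versus $2\pi i$ in $\Q(1)$, the sign and factor in $e^{-2i\delta}$, and the precise convention identifying $\Ext^1(\R(0),\R(1))$ with $\R$ used to define $\Ht$. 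We will fix these by checking the formula on the standard Kummer-type extension $\Q(1)\to B\to\Q(0)$ with period $\log|a|$, reducing to $H=0$ via functoriality, Lemma \ref{lem:2.5}, applied to the projection $B_H\to B_H/W_{-1}$ and the inclusion of $W_{-1}$.
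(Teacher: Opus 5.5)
Your first step ($\delta=\delta_{-2}=\delta^{-1,-1}$, vanishing on $W_{-1}$ and landing in $W_{-2}$) is exactly what the paper uses. The paper then treats the formula as immediate, i.e.\ as the normalization implicit in $\caB_H\otimes\R\cong\Ext^1(\R(0),\R(1))\cong\R$. So the only remaining content is the constant, and your proposal never actually produces it. Several intermediate claims are also wrong.

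\emph{The coefficient is real, not imaginary.} The claim $r\in i\R$ is false. $\delta$ is real, and $1_{B_H}$, $1^{\vee}_{B_H}$ are Betti, hence real, classes. This holds whether you realize $\Q(1)_\Q$ as $\Q$ (Example~\ref{ex:2.1}) or as $2\pi i\Q\subset\C$, since the generator lies in $\Q(1)_\R$ either way. So $r\in\R$, as it must be since $\Ht(B_H)$ is real. The side remark that $\delta$ commutes with all degree-$0$ $W$-preserving maps is also false (e.g.\ $[Y,\delta]=-2\delta$), though you do not use it.

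\emph{The computation, done correctly.} Property \emph{(iii)} of the Deligne bigrading gives $\overline{e_0}\equiv e_0 \bmod I^{-1,-1}$ directly. Nothing needs to be ``killed'' and $J(H)\otimes\R=0$ is not needed here. Write $\overline{e_0}=e_0+c\,e_{-2}$; conjugating shows $c\in i\R$. Then $\overline{Y}e_0=\overline{Y(\overline{e_0})}=-2\bar c\,e_{-2}=+2(\overline{e_0}-e_0)$. Comparing with $\overline{Y}e_0=-4i\,\delta(e_0)$ gives $\delta(e_0)=\tfrac{ic}{2}\,e_{-2}=\operatorname{Im}(e_0)$, not $\tfrac{i}{2}\cdot\tfrac{c}{2}\cdots$. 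Hence, writing $e_0=\tilde 1+z\,1^{\vee}_{B_H}$ with $\tilde 1\in B_{H,\R}$, one gets $\delta_{-2}(1_{B_H})=(\operatorname{Im} z)\,1^{\vee}_{B_H}$.

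\emph{The reduction to $H=0$ uses the wrong maps.} The projection $B_H\to B_H/W_{-1}=\Q(0)$ forgets $1^{\vee}_{B_H}$, and $W_{-1}$ contains no lift of $1_{B_H}$. Since $\delta$ vanishes on both, Lemma~\ref{lem:2.5} applied to these maps gives no information. The morphism you need is the one realizing $\caB_H\otimes\R\cong\Ext^1(\R(0),\R(1))$:
\begin{itemize}
\item Because $\Ext^1(H_\R,\R(1))=0$ ($H^\vee(1)$ is pure of weight $-1$), one has $W_{-1}B_{H,\R}\cong H_\R\oplus\R(1)$. This splitting is unique, since there are no nonzero morphisms $H_\R\to\R(1)$.
\item The quotient $B_{H,\R}\to E:=B_{H,\R}/H_\R$ is a morphism of real mixed Hodge structures onto an extension of $\R(0)$ by $\R(1)$, preserving both generators.
\item Lemma~\ref{lem:2.5} (over $\R$) then gives $\delta_E(1)=\delta_{-2}(1_{B_H})$, and the class of $E$ in $\C\,1^\vee/\R\,1^\vee\cong\R$ is $\operatorname{Im}z$.
\end{itemize}

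\emph{The constant is a convention that you must state.} Checking a Kummer extension cannot fix the constant unless you already know its height independently. The paper never specifies the isomorphism $\Ext^1(\R(0),\R(1))\cong\R$, so the statement amounts to the normalization $\Ht(B_H)=2\pi\operatorname{Im}z$. This gives $\Ht=\pm\log|a|$ for the extension with period $\log a$ relative to $1^\vee=2\pi i$, consistent with the $\log|\cdot|$ normalization of Remark~\ref{rmk:4.6}. State this explicitly rather than deferring it to ``bookkeeping''.
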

\begin{rmk}
	From the above theorem, it is immediate that $\Ht(B_{H})=0$ if and only if $B_{H}$ is split over $\R$, that is $\overline{I}^{a,b}_{B_{H}}=I^{b,a}_{B_{H}}$ for all pairs $(a,b)\in \Z\times \Z$. In this sense, the height is an obstruction to $\R$-splitness of a biextension.
\end{rmk}
Next, we restate the relation between heights and a particular type of morphisms of mixed Hodge structures.

\begin{prop}\label{prop:2.13}
	Let $B_{1}$ and $B_{2}$ be biextension mixed Hodge structures, with $1_{B_1}\in Gr^{W_1}_{0}B_1=\Q(0)$ and $1^{\vee}_{B_{1}}\in \Gr^{W_1}_{-2}=\Q(1)$ be the canonical (Betti) generators (respectively $1_{B_{2}}$ and $1^{\vee}_{B_2}$ for $B_{2}$). Let $f\colon (B_{1}, F_1, W_1)\to (B_{2}, F_2,W_2)$ be a morphism of mixed Hodge structures such that $f(1_{B_1})=1_{B_2}$ and $f(1^{\vee}_{B_1})=1^{\vee}_{B_2}$. Then $\Ht(B_1)=\Ht(B_2)$.
\end{prop}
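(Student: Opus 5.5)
The proof will combine Lemma \ref{lem:2.5}, the compatibility of Deligne splittings with morphisms of mixed Hodge structures, with Theorem \ref{height-bextn}, which reads off the height from the component $\delta_{-2}$ of the Deligne delta. Apply Lemma \ref{lem:2.5} to $f\colon B_1\to B_2$. This gives $f\circ\delta_{B_1}=\delta_{B_2}\circ f$ as maps $B_{1,\C}\to B_{2,\C}$.

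Next, for any lift $\tilde 1_{B_1}\in W_0B_1=B_1$ of the generator $1_{B_1}$, we evaluate both sides. For a biextension, $\delta=\delta_{-2}=\delta^{-1,-1}$, as observed before Theorem \ref{height-bextn}. Since $\delta$ lowers weight by $2$, $\delta_{B_1}(\tilde 1_{B_1})$ lies in $W_{-2}B_1=\Q(1)\otimes\C$. By Theorem \ref{height-bextn} it equals $\frac{1}{2\pi}\Ht(B_1)1^{\vee}_{B_1}$.

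We should check that this value is independent of the lift. Changing $\tilde 1_{B_1}$ by an element of $W_{-1}$ changes $\delta(\tilde1_{B_1})$ by an element of $\delta(W_{-1})\subset W_{-3}=0$. This is exactly the sense in which Theorem \ref{height-bextn} makes $\delta_{-2}$ well defined on $\Gr^W_0$.

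Now apply $f$. The left side gives
\[
f(\delta_{B_1}(\tilde 1_{B_1}))=\tfrac{1}{2\pi}\Ht(B_1)f(1^{\vee}_{B_1})=\tfrac{1}{2\pi}\Ht(B_1)1^{\vee}_{B_2}.
\]
For the right side, $f$ is compatible with $W$, so $f(\tilde 1_{B_1})\in W_0B_2$. By the hypothesis $f(1_{B_1})=1_{B_2}$, its image in $\Gr^{W}_0B_2$ is $1_{B_2}$, so $f(\tilde 1_{B_1})$ is a lift of $1_{B_2}$. Therefore
\[
\delta_{B_2}(f(\tilde 1_{B_1}))=\tfrac{1}{2\pi}\Ht(B_2)1^{\vee}_{B_2}.
\]
Comparing coefficients of the nonzero vector $1^\vee_{B_2}$ gives $\Ht(B_1)=\Ht(B_2)$.

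The one point needing care is the meaning of $f(1^\vee_{B_1})=1^\vee_{B_2}$. Here $1^\vee_{B_1}$ is a genuine element of $W_{-2}B_1\subset B_1$, so $f$ applies to it directly. The hypothesis on $1_{B_1}$ has to be read on $\Gr^W_0$, which is legitimate because $f$ is strictly compatible with $W$. Beyond this, the argument is a direct consequence of the functoriality of $\delta$, so I expect no real obstacle. The substantive input is Lemma \ref{lem:2.5}, which rests on \cite[Proposition 2.20]{CKS:dhs}.
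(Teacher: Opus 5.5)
Your proposal is correct and follows the paper's proof essentially verbatim: Lemma \ref{lem:2.5} gives $f\circ\delta_{B_1}=\delta_{B_2}\circ f$, and you evaluate both sides on (a lift of) $1_{B_1}$ using Theorem \ref{height-bextn}, then compare coefficients of $1^{\vee}_{B_2}$. Your check that $\delta(W_{-1})\subset W_{-3}=0$ makes explicit the lift-independence that the paper leaves implicit; note also that reading $f(1_{B_1})=1_{B_2}$ on $\Gr^W_0$ needs only compatibility of $f$ with $W$, not strictness.
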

\begin{proof}
	The proof is exactly similar to \cite[Proposition 2.8]{BGGP:Height} in a relatively easier setting. We restate it for the sake of completeness. Using Lemma \ref{lem:2.5}, and the fact that $\delta_{B_i}=\delta_{B_i,-2}$ for $i=1,2$, we have $f\circ \delta_{B_1,-2}=\delta_{B_2,-2}\circ f$. Hence
	\begin{align*}
		\frac{1}{2\pi}\Ht(B_2)1^{\vee}_{B_2}=\delta_{B_{2},-2}(1_{B_{2}})&=\delta_{B_{2},-2}(f(1_{B_{1}}))\\
		&=f(\delta_{B_{1},-2}(1_{B_{1}}))\\
		&=\frac{1}{2\pi}\Ht(B_{1})f(1^{\vee}_{B_1})\\
		&=\frac{1}{2\pi}\Ht(B_{1})1^{\vee}_{B_{2}}.
	\end{align*}
	Now it's immediate that $\Ht(B_2)=\Ht(B_1)$.
\end{proof}

\subsection{Geometric biextension}\label{ss:2.2.1}
In geometric settings, biextension mixed Hodge structures arise as relative cohomology groups as follows. Let $X$ be a smooth, projective, and complex variety of dimension $d$, $Z$ and $W$ two algebraic cycles of codimensions $p$ and $q$ respectively that are homologous to zero and intersect properly. Let us also assume the numerical relation between the codimensions $p+q=d+1$ (together with the proper intersection, which means that $\vert Z\vert \cap \vert W\vert =\emptyset$). For ease of exposition, we also assume that $Z=Z_1-Z_0$ and $W=W_1-W_0$ for reduced and irreducible subvarieties $\{Z_{0},Z_{1},W_{0},W_{1}\}$. Let $H=H^{2p-1}(X;\Q(p))$, which is a pure Hodge structure of weight $-1$. Using the long exact sequences associated with cohomology with support and relative cohomology, together with purity, and the fact that $\vert Z\vert \cap \vert W\vert =\emptyset$, we get the commutative diagram
\ref{fig:height_diagram}. 
\begin{figure}[ht]
	\centering
	\begin{displaymath}
		\xymatrix{& 0 & 0 & 0\\
			0\ar[r] & H\ar[u]\ar[r]
			& H^{2p-1}(X\setminus\vert Z\vert ;\Q(p))\ar[u]\ar[r]
			& \Q(0)\ar[u]\ar[r] & 0\\ 
			0\ar[r] & H^{2p-1}(X,\vert W\vert ;\Q(p))\ar[r] \ar[u]
			& H^{2p-1}(X\setminus\vert Z\vert ,\vert W\vert ;\Q(p)) \ar[r] \ar[u] & \Q(0)\ar[u]^{=}\ar[r] & 0\\
			0\ar[r] & \Q(1)\ar[r]^{=}\ar[u] & \Q(1)\ar[u]\\
			& 0\ar[u] & 0\ar[u]}     
	\end{displaymath}    
	\caption{Biextension diagram}
	\label{fig:height_diagram}
\end{figure}

In the diagram, we have
\begin{displaymath}
	\Q(0)=\ker\left(H^{2p}_{\vert Z\vert }(X;\Q(p))\to H^{2p}(X;\Q(p))\right),
\end{displaymath}
and
\begin{displaymath}
	\Q(1)=\text{Coker}\left(H^{2p-2}(X;\Q(p))\to H^{2p-2}(\vert W\vert ;\Q(p))\right),
\end{displaymath}
noticing the relation $2p-2=2(d+1-q)-2=2\dim(W)$. The top horizontal exact sequence in diagram \ref{fig:height_diagram} gives the extension class for $Z$ and equals the Abel-Jacobi image of $Z$, while the left vertical exact sequence gives the dual of the extension class for $W$ twisted by $\Q(1)$.

From the functoriality of mixed Hodge structures, it is easy to see that $B_{Z,W}\coloneqq H^{2p-1}(X\setminus\vert Z\vert ,\vert W\vert ;\Q(p))$ satisfies the condition on the graded pieces to be a biextension. We remark that in case the cycles $Z$ and $W$ have more components, then one gets a subquotient of the mixed Hodge structure $H^{2p-1}(X\setminus\vert Z\vert ,\vert W\vert ;\Q(p))$ as a biextension.
\begin{rmk}\label{rmk:4-5}
	Using the empty intersection between $\vert Z\vert $ and $\vert W\vert $ once again, we get the duality
	\begin{displaymath}
		H^{2p-1}(X\setminus\vert Z\vert ,\vert W\vert ;\Q(p))\cong H^{2q-1}(X\setminus\vert W\vert ,\vert Z\vert ;\Q(q))^{\vee}.
	\end{displaymath}
	This can be used to show that $B^{\vee}_{Z,W}\cong B_{W,Z}$, where $B_{W,Z}$ is the biextension obtained with the roles of $Z$ and $W$ interchanged.
\end{rmk}
\begin{rmk}\label{rmk:4.6}
	In case of a biextension of geometric origin as above, the corresponding height $\Ht(B_{Z,W})$ equals the archimedean height pairing $\langle Z,W\rangle$ between the two cycles (this is a consequence of \cite[Proposition 3.2.13]{Hain:Height}. See also \cite[\S5]{pearlstein:sl2} for further insights.). In fact, using the duality of remark \ref{rmk:4-5} one can interpret $\Ht(B_{Z,W})$ as $\int_{Z}\mathfrak{g}_W$, where $\fg_{W}$ is a Green form of logarithmic type for the cycle $W$ that can be shown to exist. For example, in $\P^1$ any choice of four distinct points $\{p,q,r,s\}$ gives rise to homologically (actually rationally) trivial cycles $Z=p-q$ and $W=r-s$. In this case, one choice for $\fg_W$ is the function $-\log\Big|\frac{t-r}{t-s}\Big|^{2}$ that has logarithmic singularities on $|W|=\{r,s\}$. The height is given by the logarithm of the cross-ratio of the four points.
\end{rmk}

\subsection{Abstract degeneration of biextension variations}\label{ss:2.3}
Our main reference for this is \cite[\S 2, \S 3]{BP:Jump}. We point out that the treatment in \cite{BP:Jump} holds more generally for an admissible nilpotent orbit with values in the classifying space of graded-polarized mixed Hodge structures. Here we restrict to admissible, graded-polarized biextension variations over $\Delta^{\ast}$.
\begin{df}\label{df-bxtn}
	Let $\caH$ be a weight $-1$ polarized variation of rational pure Hodge structures over $\Delta^{\ast}$ with unipotent monodromy. We define a biextension variation $\caB_{\caH}$ associated with $\caH$ as an admissible variation of mixed Hodge structures over $\Delta^{\ast}$, with isomorphisms
	\begin{displaymath}
		\Gr_{k}^{W}(\caB_{\caH})=
		\begin{cases}
			\Q(0),&\text{ if }k=0,\\
			\caH,&\text{ if }k=-1,\\
			\Q(1),&\text{ if }k=-2,\\
			0,&\text{ otherwise.}
		\end{cases}
	\end{displaymath}
	
\end{df}
For each $t\in \Delta^{\ast}$, one can associate the height $\Ht(\caB_{\caH,t})$, which defines a $C^{\infty}$-function
\begin{displaymath}
	h\colon \Delta^{\ast}\to \R,~ h(t)\coloneqq \Ht(\caB_{\caH,t}).
\end{displaymath}
This function does not always extend continuously to $\Delta$. There is a $\mu\in \Q$, such that the amended function
\begin{displaymath}
	\widetilde h(t)=h(t)+\mu\log\vert t\vert     
\end{displaymath}
has a continuous extension. For the rest of this section, we briefly discuss how one obtains this $\mu$ and a limit height. 

Consider the biextension variation $\caB_{\caH,t}=(F(t), W),~t\in \Delta^{\ast}$. 
As in section \ref{ss:3-1}, let $N$ be the monodromy operator, $\caB_{\caH, \lim}=(F_{\infty},M)$ the limit mixed Hodge structure with relative weight filtration $M=M(N,W)$, and $(N,F_{\infty}, W)$ the admissible nilpotent orbit. Also, let $\delta_{M}$ be the Deligne splitting associated to the limit mixed Hodge structure $(F_{\infty}, M)$, and $\widetilde{F}_{\infty}=e^{-i\delta_{M}}F_{\infty}$ be the split-Hodge filtration. Then, there exists a unique functorial $\R$-grading of the original weight filtration $W$ defined first by Deligne (see \cite[Theorem 4.4]{GP01}, \cite[\S 3,\S4] {KP:03}, and \cite[Lemma 65]{BP:Jump} for details)
\begin{displaymath}
	Y(N,F_{\infty},W)=\text{Ad}(e^{-iN})Y_{(e^{iN}\widetilde{F}_{\infty}, W)},
\end{displaymath}
which commutes with $Y_{(\widetilde{F}_{\infty}, M)}$. 
\begin{rmk}\label{rmk:4.8}
	For admissible graded-polarized biextension variations, one has (\cite[Theorem 4.15]{pearlstein:sl2})
	\begin{displaymath}
		Y(N,F_{\infty}, W)=\lim_{\im(z)\to\infty}e^{-zN}\cdot Y_{(F(z),W)}.
	\end{displaymath}
\end{rmk}
Since $Y(N,F_{\infty},W)$ is a grading of the original weight filtration, one has the decompositions of $N$ and $\delta_{M}$ into eigencomponents modulo $\text{ad}(Y(N,F_{\infty},W))$, with respect to the filtration $W$:
\begin{align*}
	&N=N_{0}+N_{-1}+N_{-2},\\
	&\delta_M=\delta_{M,0}+\delta_{M,-1}+\delta_{M,-2},
\end{align*}
where $\text{ad}(Y(N,F_{\infty},W))(N_{-j})=-jN_{-j}$, resp. $\text{ad}(Y(N,F_{\infty},W))(\delta_{M,-j})=-j\delta_{M,-j}$, for $j=0,1,2$ (since these are the weights for $W$). In fact, since we are working with a biextension variation, we have 
\begin{displaymath}
	\Gr_{k}^{W}(\caB_{\caH,\lim})=
	\begin{cases}
		\Q(0),&\text{ if }k=0,\\
		\caH_{\lim},&\text{ if }k=-1,\\
		\Q(1),&\text{ if }k=-2,\\
		0,&\text{ otherwise},
	\end{cases}
\end{displaymath}
with respect to the original weight filtration $W$. Hence the components $N_{-2}$ are $\delta_{M,-2}$ are easy to extract: $N_{-2}$ is the component of $N$ that goes from $\Q(0)$ to $\Q(1)$, while $\delta_{M,-2}$ is the component of $\delta_{M}$ that goes from $\R(0)$ to $\R(1)$.
\begin{rmk}\label{rmk:2.15}
	Notice that, $\caB_{\caH,\lim}$ may not have a biextension structure with respect to the relative weight filtration $M$. Also, the components $\delta_{M,0}$ and $\delta_{M,-1}$ can be more complex, depending on the degree of vanishing of the monodromy operator $N$.
\end{rmk}

Let $1$ denote the canonical Betti generator of $\Q(0)$ and $1^{\vee}$ that of $\Q(1)$. Following \cite[Theorem 5.19]{pearlstein:sl2}, we define $\mu$ by
\begin{equation}\label{eq:3.1}
	N_{-2}(1)=\mu 1^{\vee}.
\end{equation}
Since $N$ does not depend on the choice of a coordinate for $\Delta$, neither does $\mu$. On the other hand, the limit height is defined by (\cite[(71)]{BP:Jump})
\begin{equation}\label{eqref:3.2-1}
	\delta_{M,-2}(1)=\frac{1}{2\pi}H(N,F_{\infty},W)1^{\vee}.    
\end{equation}
In this case, since the limit mixed Hodge structure depends on a choice of coordinate for $\Delta$, so does the height $H(N,F_{\infty}, W)$. The following result justifies the name `limit height' (\cite[Theorem 76]{BP:Jump}).
\begin{prop}\label{Prop:3.2-1}
	The asymptotic height $\tilde{h}(t)=h(t)+\mu\log\vert t\vert $ extends continuously to $t=0$, with $\tilde{h}(0)=H(N,F_{\infty},W)$.
\end{prop}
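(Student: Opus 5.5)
The plan is to reduce the statement to the $\mathrm{SL}_2$/nilpotent orbit asymptotics of the height, using the description $\Ht(B)=2\pi\cdot(\text{coefficient of }\delta_{-2}(1))$ from Theorem \ref{height-bextn}, applied fibrewise. Work on the upper half plane via $t=e^{2\pi i z}$, and write $F(z)$ for the lifted Hodge filtration of the biextension variation. By Theorem \ref{height-bextn}, $h(t)=2\pi c(z)$, where $\delta_{(F(z),W),-2}(1)=c(z)1^{\vee}$.

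\textbf{Step 1 (nilpotent orbit approximation).} By the nilpotent orbit theorem for admissible variations, $F(z)=e^{zN}e^{\Gamma(t)}F_\infty$, where $\Gamma(t)\to 0$ holomorphically. Since $\delta_{(F,W)}$ depends real-analytically on $F$ (through the Deligne bigrading), the difference between $c(z)$ and the corresponding quantity $c_{\mathrm{nil}}(z)$ for the orbit $(e^{zN}F_\infty,W)$ tends to $0$ as $\im z\to\infty$. It then suffices to prove the statement for the nilpotent orbit.

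\textbf{Step 2 (computing the orbit).} Write $z=x+iy$. Because $N$ is a real $(-1,-1)$-morphism for $(F_\infty,M)$ and preserves $W$, the $e^{xN}$ factor conjugates $\delta$ by a real unipotent element. Its effect on the $\Q(0)\to\Q(1)$ component can be read off from the $\mathrm{ad}\,Y(N,F_\infty,W)$-decomposition $N=N_0+N_{-1}+N_{-2}$. Use the grading $Y=Y(N,F_\infty,W)$, which by Remark \ref{rmk:4.8} is the limit of $e^{-zN}Y_{(F(z),W)}$ and commutes with $Y_{(\widetilde F_\infty,M)}$. Conjugating back gives
\[
\delta_{(e^{iyN}F_\infty,W),-2} = e^{iyN}\bigl(\delta_{M,-2} - y N_{-2}\bigr)e^{-iyN} + o(1),
\]
with the normalisation chosen so that the $-yN_{-2}$ term is what produces $-\mu\log|t|$.

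\textbf{Step 3 (extracting the coefficient).} On $\Q(0)$, the terms coming from $N_0$ and $N_{-1}$ do not contribute to the component from $\Q(0)$ to $\Q(1)$, by weight reasons and by the reality of $\delta_{M}$. Hence
\[
2\pi c(z)=H(N,F_\infty,W)-2\pi y\mu+o(1),
\]
using \eqref{eq:3.1} and \eqref{eqref:3.2-1}. Since $2\pi y=-\log|t|$, this gives $h(t)+\mu\log|t|\to H(N,F_\infty,W)$, and continuity on $\Delta^*$ is clear.

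\textbf{Main obstacle.} The hardest step is Step 2: showing rigorously that the only surviving $\Q(0)\to\Q(1)$ contributions are $\delta_{M,-2}$ and the linear term in $N_{-2}$. In particular one must check that cross terms between $\delta_{M,-1}$, $N_{-1}$ and the pure part $\caH$ vanish or decay. I would try first to handle this by the $\mathrm{SL}_2$-orbit theorem applied to $\caH$, which describes how $\delta$ on $\caH$ behaves, combined with the fact that the $-1$ components are real and that $J(\caH)\otimes\R=0$. Failing that, I would cite \cite[Theorem 76]{BP:Jump} and \cite[Theorem 5.19]{pearlstein:sl2}, of which this is the biextension specialisation.
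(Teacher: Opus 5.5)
The paper gives no proof of this proposition: it is quoted from \cite[Theorem 76]{BP:Jump}. Your fallback of citing that result is therefore exactly the paper's route. The argument you sketch before the fallback, however, does not stand on its own. It fails at the step you flag, and also in Step 1.

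\textbf{Steps 2--3.} The asymptotic $\delta_{M,-2}-yN_{-2}$ is asserted rather than derived, and its sign is wrong. With Theorem \ref{height-bextn}, \eqref{eq:3.1} and \eqref{eqref:3.2-1} fixed, there is no normalisation left to choose. Already for $\caH=0$ one has $F^0(z)=\C\bigl(e_0+(\alpha+z\mu)1^{\vee}\bigr)$ and $\delta(e_0)=(\operatorname{Im}\alpha+y\mu)1^{\vee}$, so the correct term is $+yN_{-2}$. (Conjugation by $e^{iyN}$ is irrelevant, since $N\circ\varphi=\varphi\circ N=0$ for every $\varphi$ factoring through $\Gr^W_0\to\Gr^W_{-2}$.) Step 3 hides the sign error with an arithmetic slip: $H-2\pi y\mu=H+\mu\log|t|$, which would give $h-\mu\log|t|\to H$.

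More importantly, the cross terms cannot be discarded ``by weight reasons and reality'': composites $\Q(0)\to\caH\to\Q(1)$ have weight exactly $-2$ and are real. Here is a concrete example. Take $V=\langle e_0,a,b,1^{\vee}\rangle$ with $\caH=\langle a,b\rangle$, $Nb=a$, $Ne_0=\mu1^{\vee}$, $\widetilde F^0_\infty=\langle e_0,b\rangle$. Take $\delta_M$ real with $\delta_M(e_0)=d_M1^{\vee}+\lambda a$ and $\delta_M(b)=\kappa a+\nu1^{\vee}$.
\begin{itemize}
\item This is an admissible nilpotent orbit. $Y(N,F_\infty,W)$ has eigenspaces $\langle e_0\rangle,\langle a,b\rangle,\langle 1^{\vee}\rangle$, and $N_{-1}=0$.
\item For a biextension, $I^{0,0}$ is spanned by the unique lift of $1$ in $F^0\cap W_0$ whose imaginary part lies in $W_{-2}$. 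That imaginary part equals $\delta(e_0)$.
\item Here the lift is $e_0+i\lambda a+(id_M+z\mu)1^{\vee}+c\bigl(b+(z+i\kappa)a+i\nu1^{\vee}\bigr)$ with $c=-\lambda/(y+\kappa)$.
\item Hence $\delta(e_0)=\bigl(d_M+y\mu-\lambda\nu/(y+\kappa)\bigr)1^{\vee}$.
\end{itemize}
The cross term pairs the two halves of $\delta_{M,-1}$. It vanishes only asymptotically, because $F^0_{\caH}(z)$ degenerates.

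Controlling such terms in general is the real content of \cite[Theorem 76]{BP:Jump}. It needs two things your sketch does not supply:
\begin{itemize}
\item the exact identity $Y_{(e^{zN}\widetilde F_\infty,W)}=\mathrm{Ad}(e^{zN})Y(N,F_\infty,W)$ for the $\R$-split orbit, which is where the specific grading enters;
\item $\mathrm{SL}_2$-type estimates for $\delta_{M,0}$ and $\delta_{M,-1}$.
\end{itemize}

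\textbf{Step 1.} Real-analytic dependence of $\delta$ on $F$ does not let you replace $F(z)$ by $e^{zN}F_\infty$. Both leave every compact subset of the classifying space, and the coefficient you compare diverges like $y\mu$. Suppose you move back to a compact region with a real $W$-preserving element such as $y^{Y_M/2}e^{-xN}$, where $Y_M=Y_{(\widetilde F_\infty,M)}$. Then the $\Q(0)\to\Q(1)$ coefficient is multiplied by $y$ on the way back. An $o(1)$ error after rescaling is therefore only $o(y)$ in $h$, which swallows the constant $H$. The step survives only because the $\Gamma(t)$-contribution is exponentially small in $y$ after rescaling, and that estimate is what has to be proved.
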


\section{Degeneration of a family of Ceresa cycles and their heights}\label{sec 4}

In this section, given a family of genus three curves $\caC\to \Delta$, we introduce (a family of) Ceresa cycles, and describe their behaviour in the limit. 
\subsection{Degeneration of Ceresa cycles}\label{ss3.1}
Let $\caC\rightarrow \Delta$ be a family as in \ref{degenerations}. We first discuss the case when $C_0$ is an irreducible nodal curve, with one node $x_0$.

Let $\beta: \Delta^* \to \caC, t\to b_t$ be a section fixing a base point $b_t \in C_t, t\neq 0$, and consider a holomorphic extension to $\Delta$, sending $t=0$ to a point $b\in C_0, b\neq x_0$. For $t\in \Delta^*$, the Abel Jacobi map \eqref{AJacobi smooth} embeds $C_t\hookrightarrow JC_t$ through $x\mapsto x-b_t$, and the Ceresa cycle is defined by \begin{displaymath}
	Z_t:=C_t -[-1]^{\ast}C_t\in Z^{2}(JC_t).
\end{displaymath} In general, this cycle gives a non-trivial element in the codimension two Griffiths group of $JC_{t}$. Using results in section \ref{S1}, we get what follows.

\begin{prop}\label{prop:4-1}
	The family of Ceresa cycle $Z_t \in Z^{2}(JC_t)$ extends to $Z_{0}\coloneqq C_{0}-[-1]^{\ast}C_{0}$  in $Z^{2}(X_{0})$, which is homologous to zero in $H^{4}(X_{0};\Q(2))$.
\end{prop}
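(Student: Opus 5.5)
The plan has two parts: first construct the limit cycle $Z_0$ on $X_0$, then show it is homologically trivial. For the construction, the Abel map \eqref{Abel non sep} with base point $b=\beta(0)\neq x_0$ embeds $C_0$ into the compactified Jacobian $X_0$ (since $C_0\neq \P^1$). The family version is a relative Abel map $\caC\hookrightarrow \caJ$, where $\caJ\to\Delta$ is the family of compactified Jacobians with central fibre $X_0$. Its fibres over $t\neq 0$ are the curves $x\mapsto x-b_t$. Taking closures in $\caJ$ of the relative cycles $\caC$ and $[-1]^*\caC$, we get a flat family of $1$-cycles. We need to check that the inversion $[-1]$ on $JC_t$ extends to an involution of $X_0$. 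On $JC_0$ this is $M\mapsto M^{\vee}$. On the boundary $J\wt C_0$ it is given by the identification \eqref{identifiaction sections}: it swaps the roles of $Y_0$ and $Y_\infty$ and acts on $J\wt C_0$ by $D\mapsto -D$ up to a translation by $p-q$. So $[-1]^*C_0$ is a well-defined curve in $X_0$ and is the flat limit of $[-1]^*C_t$. Hence the specialization of $Z_t$ is $Z_0=C_0-[-1]^*C_0\in Z^2(X_0)$.

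For homological triviality I would use \eqref{eq:1.2} with $k=4$: $H^4(X_0;\Q)\cong H^4(\wt X_0;\Q)\oplus H^3(J\wt C_0;\Q)$. The cycle class of $Z_0$ is a Hodge class, so it has pure weight $4$ and lies in the weight-$4$ part $W_4/W_3\cong H^4(\wt X_0;\Q)$. Weight $3$ is excluded since $\cl(Z_0)$ is pure of weight $4$; the precise formulation I would use is that the class is detected by pullback $\nu^*$ to the normalization $\nu:\wt X_0\to X_0$, since $W_3=\ker\nu^*$ on $H^4$. The problem thus reduces to showing $\nu^*\cl(Z_0)=\cl(\wt Z_0)=0$ in $H^4(\wt X_0;\Q)$. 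Here $\wt Z_0=\wt C_0'-[-1]^*\wt C_0'$, and $\wt C_0'$ is the strict transform of $C_0$, which is a copy of the normalization $\wt C_0$ in $\wt X_0=\P(\caO\oplus L)$.

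To compute that class, use \eqref{eq:1.3}: $H^4(\wt X_0)\cong H^4(J\wt C_0)\oplus H^2(J\wt C_0)(-1)$. Pushing forward to $J\wt C_0$ via the bundle map $\rho$ gives one component. Under $\rho$, the curve $\wt C_0'$ maps to the Abel–Jacobi image of $\wt C_0$, up to translation. Its reflection maps to the reflected translated curve. These two curves are homologous in the abelian variety $J\wt C_0$, because $[-1]$ acts trivially on $H^{2g-2}$ of even degree and translations act trivially on cohomology. The other component is the intersection number with the class $\xi=c_1(\caO(1))$ of a tautological section. It equals the degree of $\caO(1)$ restricted to each curve, and it can be computed by intersecting with $Y_0$ and $Y_\infty$. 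The strict transform of $C_0$ meets $Y_0$ and $Y_\infty$ once each, at $p$ and $q$, and the same holds for the reflected curve with the roles swapped. So the degrees agree, and the difference vanishes.

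The main obstacle is this last step. It requires a careful description of how $[-1]$ lifts to $\wt X_0$: it exchanges the sections $Y_0$ and $Y_\infty$ and covers $D\mapsto -D+(p-q)$. It also requires checking that the fibre-degree component really cancels rather than picking up a sign. If the direct computation gets messy, I would instead use a specialization argument. The class of $\caZ$ in $H^4(\caJ)\cong H^4(X_0)$ (Theorem \ref{thm:3.1}(i)) restricts to $\cl(Z_t)=0$ on $JC_t$. So it lies in the kernel of $i^*$, which is the image of $\alpha$ from homology. I would then use weights to show that this image meets the pure weight-$4$ part trivially.
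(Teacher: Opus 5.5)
Your main argument is correct, and its skeleton matches the paper's: both reduce by weights to the pure quotient $\Gr^W_4H^4(X_0;\Q)\cong H^4(\wt X_0;\Q)$ and then show that the class vanishes there. Your formulation is the cleaner one: a class of pure weight $4$ is detected by $\nu^*$ because $W_3=\ker\nu^*$ on $H^4(X_0)$. The paper instead reads this off the non-canonical splitting \eqref{eq:1.2}.

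The two routes differ only in the last step. The paper argues by symmetry: $\cl(Z_0)$ is anti-invariant under $[-1]$, and $[-1]$ acts as the identity on $H^4(\wt X_0;\Q)$. It obtains the anti-invariance through a localization sequence in Chow groups, although it is immediate from $[-1]^*Z_0=-Z_0$. You instead compute the two projective-bundle components of $\cl(\wt Z_0)$ directly. For this particular class, that proves the trivial-action statement the paper asserts without proof.

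The symmetry version is shorter and also removes your worry about a sign in the fibre-degree component. One of $\caO(Y_0),\caO(Y_\infty)$ is $\caO(1)$, and the other differs from it by $\rho^*L^{\pm1}$. Since $c_1(L)=0$ in $H^2(J\wt C_0;\Q)$, you get $[Y_0]=[Y_\infty]=\xi$ rationally. Because $[-1]$ swaps the two sections, it fixes $\xi$. It also acts trivially on $H^2$ and $H^4$ of $J\wt C_0$, since translations act trivially on cohomology. Hence it acts trivially on $H^4(\wt X_0;\Q)=\rho^*H^4(J\wt C_0)\oplus\rho^*H^2(J\wt C_0)\,\xi$, with no intersection multiplicities to check.

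One caution about your fallback: the weight argument as sketched cannot work. The image of $\alpha\colon H_4(X_0;\Q)(-4)\to H^4(X_0;\Q)$ is automatically pure of weight $4$, since the source has weights $\geq 4$ and the target weights $\leq 4$. So the image sits exactly where $\cl(Z_0)$ lives. What actually makes that route work is that $\alpha=0$ here. Indeed $\dim H^4(X_0;\Q)=7+4=11=\dim\ker\bigl(N \text{ on } H^4_{\lim}(JC_t;\Q)\bigr)$, so $i^*$ is injective on $H^4$, granting Clemens--Schmid for the family of compactified Jacobians.
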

\begin{proof}
	The first statement follows from \eqref{Abel non sep}, which embeds $C_{0}$  in $X_{0}$, and from \eqref{identifiaction sections}, which implies that the $[-1]$ morphism can be lifted to $X_{0}$. Next, using standard Verdier duality isomorphism $H^{4}_{|Z_0|}(X_0;\Q(2))\cong H^{0}(Z_0;\Q(0))$ and the fact that $|Z_0\cap S|=\{(p-b,0), (b-p,\infty)\}$ is finite, we get that $H^{4}_{|Z_0|}(X_0;\Q(2))\cong \Q(0)\oplus \Q(0)$. The morphism $H^{4}_{|Z_0|}(X_0;\Q(2))\to H^{4}(X_0;\Q(2))$ defines the cycle class map. We show that $Z_{0}$ is homologous to zero in $H^{4}(X_{0};\Q(2))$. The cycle class of $Z_0$ lands in the weight zero component of $H^{4}(X_0;\Q(2))$ which by equation \eqref{eq:1.2} is given by $H^{4}(\wt X_0;\Q(2))$. Further, since $Z_0\restr{JC_0}=0$ in $\CH^2(JC_0)$, we use the localization exact sequence of Chow groups
	\begin{displaymath}
		\CH^1(J\widetilde C_0)\xrightarrow{\iota_*}\CH^2(X_0)\ra \CH^2(JC_0),
	\end{displaymath}
	to conclude that $Z_0=\iota_*(\Gamma)$, for $\Gamma=[p-q]-[q-p]\in \CH^1(J(\widetilde C_0))$. Hence, the cycle class of $Z_0$ belongs to $ H^4(X_0;\Q(2))^{-}$, which is the anti-invariant subspace of $H^{4}(X_{0};\Q(2))$ with respect to the involution morphism $[-1]$. But the involution $[-1]$ acts as identity on $H^{4}(\wt X_0;\Q(2))$, implying $H^4(\wt X_0;\Q(2))^{-}=0$. Hence, we can conclude the result. 
\end{proof}

\begin{rmk}\label{cicles on norm}
	In the normalization $\widetilde{X}_{0}$ of $X_0$, the class of the cycle $\widetilde{Z}_{0}$ (the normalization of $Z_0$) has an explicit description using the projective bundle formula. Indeed, recall that $\widetilde{X}_{0}=\P(\caO_{J\widetilde{C}_0}\oplus L_{p-q})$ is the projective bundle over $J\widetilde{C}_{0}$. Under the projective bundle isomorphism
	\begin{displaymath}
		\CH^{2}(\widetilde{X}_{0})\cong \CH^{2}(J\widetilde{C}_{0})\oplus \CH^{1}(J\widetilde{C}_{0}),
	\end{displaymath}
	the image of the cycle $\widetilde{Z}_{0}$ is given by $(p-q, \widetilde{C}_{0}-[-1]^{\ast}\widetilde{C}_{0})$, where we view $p-q$ as a zero cycle in $J\wt C_0$. Since by assumption \ref{assum:1.1} the zero cycle $p-q$ is torsion, the class of $\wt{Z}_0$ is trivial in $\CH^{2}(\widetilde{X}_{0})\otimes \Q$. Hence $\wt Z_0$ is rationally equivalent to zero in $Z^{2}(\wt X_0)\otimes \Q$.
\end{rmk}

Even though $X_0$ is a singular variety, we show in the next proposition that purity holds for the pair $(X_0,Z_0)$. This is probably well known to experts, but we provide a proof for completeness.
\begin{prop}\label{prop:4.4}
	The cohomology with support for the pair $(X_0,Z_0)$ satisfies
	\begin{displaymath}
		H^{k}_{|Z_0|}(X_0;\Q(2))=
		\begin{cases}
			0, &\text{ if } k<4,\\
			\Q(0)\oplus \Q(0) &\text{ if } k=4.
		\end{cases}
	\end{displaymath}
	Moreover, $\ker\Big(H^{4}_{|Z_0|}(X_0;\Q(2))\to H^{4}(X_0;\Q(2))\Big)\cong \Q(0)$, generated by the fundamental class $\{Z_0\}$.
\end{prop}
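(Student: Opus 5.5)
We compute $H^{*}_{|Z_0|}(X_0;\Q(2))$ by reducing to the normalization, where purity holds, and then checking what happens along the singular locus $S=J\wt C_0$. The support $|Z_0|=C_0\cup[-1]^{\ast}C_0$ is a union of two curves. By the proof of Proposition \ref{prop:4-1}, it meets $S$ only in the finite set $\{(p-b,0),(b-p,\infty)\}$; the two curves themselves meet in finitely many points as well.

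\textbf{Step 1 (excision to a neighbourhood).} By excision, $H^{k}_{|Z_0|}(X_0)$ depends only on a small neighbourhood of $|Z_0|$. Away from the finitely many points of $|Z_0|\cap S$, the space $X_0$ is smooth of complex dimension $3$. We use the Mayer--Vietoris sequence for cohomology with supports for the cover of $|Z_0|$ by $|Z_0|\setminus S$ and small balls around the points of $|Z_0|\cap S$. On the smooth part, standard purity applies: for a curve $\Gamma$ (possibly singular) in a smooth threefold $U$, we have $H^{k}_{\Gamma}(U)=0$ for $k<4$ and $H^{4}_{\Gamma}(U)\cong H^{BM}_{2}(\Gamma)(-2)$. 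The latter is freely generated by the classes of the irreducible components, which in our case are $C_0$ and $[-1]^{\ast}C_0$.

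\textbf{Step 2 (the points of $|Z_0|\cap S$).} Near a point $x\in|Z_0|\cap S$, the space $X_0$ is locally analytically (smooth $2$-dimensional germ) $\times\{uv=0\}$, since two smooth branches of $\wt X_0$ are glued along $S$. The curve $|Z_0|$ passes through $x$ along the image of the node branch of $C_0$. The local cohomology $H^{k}_{\{x\}}(X_0)$ is reduced cohomology of the link. That link is the suspension-type join $S^{3}*\{\text{two points}\}$, which is homotopic to $S^{4}\vee S^{4}$ minus an identification; its cohomology sits in degrees $\ge 5$. So points contribute nothing in degree $\le4$. Equivalently, $X_0$ is a local complete intersection of dimension $3$, so by the depth/Grothendieck vanishing for lci (rational homology manifold-like in low degrees) we get $H^{k}_{\{x\}}(X_0;\Q)=0$ for $k<6-1=5$.

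Feeding Steps 1 and 2 into Mayer--Vietoris gives the vanishing for $k<4$, and in degree $4$ it gives $\Q(0)\oplus\Q(0)$, generated by the fundamental classes $[C_0]$ and $[[-1]^{\ast}C_0]$. The $\Q(0)$ Tate type follows from the Gysin description, or by pulling back to $\wt X_0$, where the Hodge structure is visibly of type $(0,0)$ after the twist. The main obstacle is making the local computation of Step 2 rigorous. First try: the local link computation, using that $X_0\to$ (germ) is the union of two smooth $3$-dimensional branches meeting transversally along a smooth surface. This lets one use the Mayer--Vietoris sequence for the two branches $B_1,B_2$ with $B_1\cap B_2=S$, so that $H^{k}_{\{x\}}(X_0)$ sits between $H^{k}_{\{x\}}(B_1)\oplus H^{k}_{\{x\}}(B_2)$ (nonzero only for $k=6$) and $H^{k-1}_{\{x\}}(S)$ (nonzero only for $k-1=4$). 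Hence it vanishes for $k\le4$.

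\textbf{Step 3 (the kernel).} The map $H^{4}_{|Z_0|}(X_0;\Q(2))\to H^{4}(X_0;\Q(2))$ sends the two generators to the cycle classes of $C_0$ and $[-1]^{\ast}C_0$. The class of $C_0$ is nonzero: for instance, its pullback to $\wt X_0$ has nonzero intersection with the pullback of an ample divisor. Thus the image has rank at least $1$. By Proposition \ref{prop:4-1}, the difference $\{C_0\}-\{[-1]^{\ast}C_0\}=\{Z_0\}$ maps to zero. Hence the kernel has rank exactly $1$, and it is generated by $\{Z_0\}$. It is a sub-Hodge structure of $\Q(0)\oplus\Q(0)$, hence $\cong\Q(0)$.
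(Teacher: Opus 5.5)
Your vanishing for $k<4$ does follow from what you prove, via the sequence displayed below. Step 3 is in fact more careful than the paper, which deduces $x+y=0$ from $\cl(C_0)=\cl([-1]^{\ast}C_0)$ without noting that $\cl(C_0)\neq 0$. The gap is the degree-$4$ count. Step 2 proves $H^{k}_{\{x\}}(X_0)=0$ for $k\le 4$ at the points of $F:=|Z_0|\cap S=\{x_1,x_2\}$. Plugged into
\[
0=H^{4}_{F}(X_0)\to H^{4}_{|Z_0|}(X_0)\to H^{4}_{|Z_0|\setminus F}(X_0\setminus F)\cong \Q(0)^{\oplus 2}\xrightarrow{\ \partial\ } H^{5}_{F}(X_0),
\]
this only gives injectivity. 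Moreover $H^{5}_{\{x\}}(X_0)\neq 0$: your own two-sheet sequence gives $0\to H^{4}_{\{x\}}(S)\cong\Q\to H^{5}_{\{x\}}(X_0)\to 0$. So you still have to show that $\partial$ kills the classes of $C_0\setminus\{x_1\}$ and $[-1]^{\ast}C_0\setminus\{x_2\}$. Equivalently, in the Mayer--Vietoris sequence you set up, the local term at $x$ is $H^{k}_{|Z_0|\cap B_x}(B_x)$, not $H^{k}_{\{x\}}(B_x)$, and in degree $4$ it is nonzero. If it were zero, as ``points contribute nothing in degree $\le 4$'' asserts, the sequence would force $H^{4}_{|Z_0|}(X_0)=0$, because the classes of the two components restrict injectively to the punctured balls.

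The missing input is how $C_0$ passes through $x_1$. There it has exactly two branches, the images of neighbourhoods of $p$ and $q$ in $\wt C_0$. These cannot lie in the same section of $\wt X_0$, since their limits $p-b\neq q-b$ in $J\wt C_0$ must become identified in $X_0$. So there is one branch $\gamma_j$ in each sheet $B_j$, each meeting $S$ transversally; locally $X_0=\{uv=0\}\times\C^{2}_{s,w}$ and $C_0=\{s=w=0\}$. Your two-sheet Mayer--Vietoris, now with supports in $\Gamma_x=|Z_0|\cap B_x$, gives
\[
0\to H^{4}_{\Gamma_x}(B_x)\to H^{4}_{\gamma_1}(B_1)\oplus H^{4}_{\gamma_2}(B_2)\to H^{4}_{\{x\}}(S),\qquad (a,b)\mapsto a-b .
\]
Hence $H^{4}_{\Gamma_x}(B_x)\cong\Q$ is the diagonal. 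The global class of each component has coefficient $1$ on both branches, so it lies there, and only then does Mayer--Vietoris yield $\Q(0)\oplus\Q(0)$. This input is genuinely needed: for a curve with a single branch through $x$ inside one sheet, $H^{4}_{\Gamma_x}(B_x)=0$ and its class on the complement does not extend.

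Two side remarks in Step 2 are also wrong, though not load-bearing. The link of $x$ is $S^{3}*(S^{1}\sqcup S^{1})$, two $5$-spheres glued along an $S^{3}$, not $S^{3}*\{\text{two points}\}$. Being lci only gives $H^{k}_{\{x\}}(X_0)=0$ for $k<3$, not $k<5$. Your two-sheet argument is the correct justification of the vanishing for $k\le 4$.
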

\begin{proof}
	The equality for $k=4$ is already shown in Proposition \ref{prop:4-1}. For ease of writing, we rename $U=JC_0$, which is the open component of $X_0$, $S=J\wt C_0$, the singular locus, and $Z_{0,U}\coloneqq Z_0\cap U$. We will use the following standard facts: From definition, the cohomology with compact support is given by $H^{k}_{c}(U;\Q(2))=H^{k}(X_0,S;\Q(2))$, and $H^{k}_{|Z_{0,U}|,c}(U;\Q(2))=H^{k}_{|Z_{0,U}|}(U;\Q(2))$. Now, we specialize the long exact sequence of relative cohomology with support in $|Z_0|$
	
	\begin{multline*}
		\cdots \to H^{k-1}_{|Z_0\cap S|}(S;\Q(2))\to H^{k}_{|Z_{0,U|}}(U;\Q(2))\to H^{k}_{|Z_0|}(X_0;\Q(2))\\
		\to H^{k}_{|Z_0\cap S|}(S;\Q(2))\to H^{k+1}_{|Z_{0,U}|}(U;\Q(2))\to\cdots,
	\end{multline*}
	to $k<4$. Since $S$ is smooth and projective and $|Z_0\cap S|$ is finite, we can use semi-purity to conclude that $H^{k-1}_{|Z_0\cap S|}(S;\Q(2))=H^{k}_{|Z_0\cap S|}(S;\Q(2))=0$. Since $U$ is smooth, semi-purity once again asserts that $H^{k}_{|Z_{0,U}|}(U;\Q(2))=0$. Hence, we conclude $H^{k}_{|Z_0|}(X_0;\Q(2))=0$ for $k<4$. For the last statement, we use the fact that $Z_0$ is homologous to zero in $H^{4}(X_0;\Q(2))$. This implies that $\cl(C_0)=\cl([-1]^{\ast}C_0)$ in $H^{4}(X_0;\Q(2))$. Notice that, every element in $H^{4}_{|Z_0|}(X_0;\Q(2))$ is of the form $\xi=x\{C_0\}+y\{[-1]^{\ast}C_0\}$ for $x,y\in \Q$, where $\{Y\}$ denotes the fundamental class of a cycle $Y$ in $H^{k}_{|Y|}(X;\Q(p))$. The morphism $H^{4}_{|Z_0|}(X_0;\Q(2))\to H^{4}(X_0;\Q)$ is defined by the assignment $\{C_0\}\to \cl(C_0)$ (respectively $\{[-1]^{\ast}C_0\}\to \cl([-1]^{\ast}C_0)$). Hence we can conclude using $\cl(C_0)=\cl([-1]^{\ast}C_0)$ that $x+y=0$, and
	\begin{displaymath}
		\ker\Big(H^{4}_{|Z_0|}(X_0;\Q(2))\to H^{4}(X_0;\Q(2))\Big)=\langle \{Z_0\}\rangle\cong \Q(0).
	\end{displaymath}
\end{proof}

Let us move to the second degeneration discussed in section \ref{degenerations}: We have a family $\caC\rightarrow \Delta$ of genus three curves, whose central fibre $C_{0}=C_{1}\cup C_{2}$ is reducible, with smooth irreducible components $C_{1}$ of genus $1$ and $C_{2}$ of genus $2$, meeting at $y_0=C_0\cap C_1$. In this case, the following holds:
\begin{prop}\label{prop:5.4}
	The family of Ceresa cycles $Z_t\in Z^2(JC_t)$ degenerates to a cycle ${Z}_0:=C_0-C_0^-\in JC_0$ in $Z^2(JC_0)=Z^2(JC_1\times JC_2)$ which is homologous to zero.
\end{prop}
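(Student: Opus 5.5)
We need to show two things: that the Ceresa cycles $Z_t$ specialize to $Z_0=C_0-C_0^-$ in $X_0=JC_1\times JC_2$, and that $Z_0$ is homologous to zero there.

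\emph{Step 1: existence of the limit.} Choose the base point section $b_t$ extending to a point $b=x\in C_2\setminus\{y_0\}$. In the reducible case the relative compactified Jacobian is smooth and proper over $\Delta$, with central fibre $JC_1\times JC_2$. The twisted Abel map \eqref{Abel sep} (explicitly \eqref{cycle X}) is the limit of the Abel--Jacobi embeddings $C_t\hookrightarrow JC_t$. Hence the flat limit of the curves $C_t\subset JC_t$ is the image $C_0$. By \eqref{cycle X}, this image is
\begin{displaymath}
C_0=\big(AJ_{q_1}(C_1)\times\{q_2-x\}\big)\cup\big(\{0\}\times AJ_x(C_2)\big).
\end{displaymath}
The involution $[-1]$ on the family restricts to $[-1]$ on $JC_1\times JC_2$. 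Therefore $[-1]^*C_t$ specializes to $C_0^-:=[-1]^*C_0$, and $Z_t$ specializes to $Z_0=C_0-C_0^-$. This is the same argument as the first part of Proposition \ref{prop:4-1}, but simpler, because no boundary is needed.

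\emph{Step 2: homological triviality.} The central fibre $X_0$ is smooth and projective, and $[-1]$ acts on $H^4(X_0;\Q)=\wedge^4H^1(X_0;\Q)$ by $(-1)^4=1$. The cycle class of $Z_0$ satisfies $[-1]^*\cl(Z_0)=\cl(C_0^-)-\cl(C_0)=-\cl(Z_0)$. So $\cl(Z_0)$ is simultaneously invariant and anti-invariant under $[-1]$, and hence vanishes.

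Alternatively, one can argue by specialization. The class of $Z_t$ is zero in $H^4(JC_t)$, and the limit is pure with $N=0$, so $H^4_{\lim}\cong H^4(X_0)$. Since the specialization map is compatible with cycle classes, $\cl(Z_0)=0$.

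\emph{Main obstacle.} The only delicate point is Step 1: verifying that the limit of $C_t$ inside the family of Jacobians is really the twisted Abel image. The untwisted map would disconnect or degenerate the component $C_1$. I would handle this through the twister description preceding \eqref{Abel sep}. On the total space, the sheaf $\mathcal{O}(c-b_t)$ extends as $\mathcal{I}_c\otimes\mathcal{O}(x)$ twisted by the divisor $C_1\subset\caC$. This twisted extension has multidegree $(0,0)$, so it lands in $JC_1\times JC_2$. I would then check the components using \eqref{cycle X}.
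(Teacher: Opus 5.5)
Your proposal is correct. Step 1 is at the same level of detail as the paper, which likewise reads off the limit of $C_t$ from the twisted Abel map \eqref{cycle X}; your multidegree check with the twister $\caO_{\caC}(C_1)$ is a useful supplement.

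The real difference is in Step 2. The paper takes the base point $x\in C_2$ to be a Weierstrass point and writes $Z_0$ explicitly as
$\{0\}\times(C_2-[-1]^*C_2)+C_1\times\{q_2-x\}-[-1]^*C_1\times\{x-q_2\}$.
It then treats the pieces separately, using that translates in the abelian surface $JC_2$ are homotopic and that $[-1]$ acts trivially on even homology. The non-Weierstrass case is only asserted to be similar, and the same holds for a base point on $C_1$.

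You instead apply the anti-invariance argument once, globally, on the smooth projective abelian threefold $X_0$. There $[-1]^*$ is the identity on $H^4(X_0;\Q)=\wedge^4H^1(X_0;\Q)$, while $[-1]^*\cl(Z_0)=-\cl(Z_0)$. This is shorter and independent of the base point. It applies verbatim to the choice $\beta_1(0)\in C_1$ that the paper actually uses for $Z_0$ in Proposition \ref{prop:5.6} and Theorem \ref{thm:4-4}. Your specialization alternative is also sound, since the relative Jacobian is an abelian scheme over $\Delta$, and it fits naturally with the limit-MHS viewpoint of the main theorem.

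What the paper's explicit decomposition buys is concrete knowledge of $Z_0$ beyond its cohomology class: a genus-two Ceresa-type piece in $\{0\}\times JC_2$ plus a difference of two translates of $C_1$. That is the kind of information behind its later remark that $Z_0$ and $W_0$ need not be rationally equivalent to zero.
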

\begin{proof}
	The image of $C_0\subset JC_0$ via the Abel Jacobi map \eqref{cycle X} is given by two irreducible components meeting at the image of the node. Let us call the two components $C_1, C_2$.  Furthermore, let us observe that, since $JC_0\cong JC_1\times JC_2$, the morphism $[-1]$ of $JC_{0}$ corresponds to the multiplication $([-1],[-1])$ on $JC_1\times JC_2$. Let $q_i\in C_i, i=1,2$ be the preimages of the node $y_0\in C_0$, and, to simplify the argument, let us assume that $x\in C_2$ is a smooth Weiestrass point.  Hence, \begin{align*}
		Z_0&= \caO_E\times (C_2-[-1]^*C_2)+C_1\times\caO_{C_2}(q_2-x)-[-1]^*C_1\times\caO_{C_2}(x-q_2)
	\end{align*}
	Since $JC_2$ is an abelian surface, $C_1\times\caO_{C_2}(q_2-x)$ and $C_1\times\caO_{C_2}(x-q_2)$ are homotopic, using as always that $[-1] $ acts as the identity on even homology, we can conclude the result. A similar argument applies when $x$ is not a Weiestrass point.   
\end{proof}

\subsection{Degeneration of height pairings}
We now consider the geometric biextension recalled in section \ref{sec 3}.
We study this construction for a family $\mathcal{C}\to \Delta$ of genus three curves, and analyze the limits of the associated heights. We consider the two degenerations of the Ceresa cycles described in subsection \ref{ss3.1}.

Let us first discuss the case when the fibre $C_0$ over $0\in \Delta$ is nodal and irreducible. Let us consider two sections $\beta_i:\Delta\to\mathcal{C}$, $i=1,2$, with $b_{1,t}:=\beta_1(t)\neq\beta_2(t)=:b_{2,t}$, and for $ t\neq 0$ let $C_{t, b_{i,t}}$ be the image of the Abel Jacobi embeddings. Let 
\begin{equation}\label{ceresas smooth}
	Z_t:= C_{t, b_{1,t}}-[-1]^* C_{t, b_{1,t}}, \quad W_t:= C_{t, b_{2,t}}-[-1]^* C_{t, b_{2,t}}, t\neq 0
\end{equation}
the corresponding Ceresa cycles. Set $b_{1,0}:=\beta_1(0)$ and $b_{2,0}:=\beta_2(0)$, and assume that $b_{1,0}\neq b_{2,0}$. Using these points, define
\begin{displaymath}
	Z_0 := C_{0,b_{1,0}} - [-1]^* C_{0,b_{1,0}}, \qquad
	W_0 := C_{0,b_{2,0}} - [-1]^* C_{0,b_{2,0}},
\end{displaymath}
as the extensions in $0\in \Delta$ of the families $Z_t, W_t$ as in Proposition~\ref{prop:4-1}. Then the following holds:
\begin{prop}\label{prop:5.5}
	The Ceresa cycles $Z_t$ and $W_t$ intersect properly in $JC_t$ for $t\in \Delta^*$, in other words, they are disjoint. Moreover, the same conclusion holds for $Z_0$ and $W_0$ in $X_0$. 
	\begin{proof}
		Let $t\in \Delta^*$. We claim that if \begin{equation}\label{notin}
			b_{2,t}\notin \{y-x+b_{1,t}, x, y \in C_t\}\cup \{y+x-b_{1,t}, x, y \in C_t\},
		\end{equation} 
		then $Z_t\cap W_t= \emptyset$. Indeed, assume by contradiction that we have $Z_t\cap W_t\neq \emptyset$. Then either $C_{t, b_{1,t}}\cap C_{t, b_{2,t}}\neq \emptyset$ or $C_{t, b_{1,t}}\cap C_{t, b_{2,t}}^{-}\neq \emptyset$. In the first case $b_{2,t}$ belongs to $\{y-x+b_{1,t}, x, y \in C_t\}$, while in the second to $\{y+x-b_{1,t}, x, y \in C_t\}$. The other intersections $C_{t, b_{1,t}}^-\cap C_{t, b_{2,t}}^-, C_{t, b_{1,t}}^-\cap C_{t, b_{2,t}}  $ give equivalent condition. Therefore, since the locus in \eqref{notin} is of dimension two, and $JC_t$ is three-dimensional, there exists $b_{2,t} \in C_t$ satisfying \eqref{notin}, and thus $Z_t\cap W_t= \emptyset$. To guarantee that the same holds at the limit, let us recall that $Z_0$, resp. $W_0$, touches the singular locus of $X_0$ only at the image of the node $x_0$, and at $[-1]x_0$. Using \eqref{identifiaction sections}, these are $(p-b_{1,0}, 0)$ amd $(b_{1,0}-p,\infty)$ for $Z_0$, and $(p-b_{2,0}, 0)$ and $(b_{2,0}-p,\infty)$ for $W_0$. These are clearly different points, as soon as $b_{1,0}\neq b_{2,0}$. On the other hand, on $C_0\setminus \{x_0\}$, the Abel-Jacobi map behaves much like the smooth scenario. Therefore, we can use the same argument as before to conclude.
	\end{proof}
\end{prop}

Let us now consider the case when the central fiber of our family $\caC \to \Delta$ is a reducible curve $C_0=C_1\cup C_2$. Let $\beta_i: \Delta^*\to \caC, $ $i=1,2$, define the base points for the Abel Jacobi embeddings for the Ceresa Cycles $Z_t, W_t, t\in\Delta^*$ as in \eqref{ceresas smooth}. Consider two holomorphic extensions of the $\beta_i$'s to $0\in \Delta$ such that $x:=\beta_1(0)\in C_1$ and $y:=\beta_2(0)\in C_2$. Then, let $C_{0,x}$ and $ C_{0,y}$ in $JC_1\times JC_2$ be the two Abel Jacobi embedding of $C_0$ obtained using \eqref{cycle X} and \eqref{cycle X, punto base su C_1}. Define\begin{displaymath}
	Z_0:= C_{0,x}-[-1]^* C_{0,x}, \quad W_0:= C_{0,y}-[-1]^* C_{0,y}
\end{displaymath}
as the extension of the family of the Ceresa cycle as in Proposition \ref{prop:5.4}. As before, the following holds: 
\begin{prop}\label{prop:5.6}
	The Ceresa cycles $Z_t$ and $W_t$ intersect properly in $JC_t$, and the same conclusion holds for $Z_0$ and $W_0$ in $JC_0\cong JC_1\times JC_2$.
\end{prop}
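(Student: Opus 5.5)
The plan splits into two parts. For $t\in\Delta^*$, the argument is the same dimension count as in Proposition \ref{prop:5.5}. For $t=0$, I would describe the components of $Z_0$ and $W_0$ explicitly in $JC_1\times JC_2$ using \eqref{cycle X} and \eqref{cycle X, punto base su C_1}, and check disjointness component by component.

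\emph{Smooth fibres.} Suppose $Z_t\cap W_t\neq\emptyset$. Then one of $C_{t,b_{1,t}}$, $[-1]^*C_{t,b_{1,t}}$ meets one of $C_{t,b_{2,t}}$, $[-1]^*C_{t,b_{2,t}}$. This forces $b_{2,t}$ to lie in the locus $\{y-x+b_{1,t}\}\cup\{x+y-b_{1,t}\}$ of \eqref{notin}. That locus has dimension at most two inside the threefold $JC_t$. Hence a general choice of the section $\beta_2$ avoids it, and we may shrink $\Delta$ if necessary.

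\emph{Central fibre, components of $Z_0$.} The base point $x=\beta_1(0)$ lies on $C_1$, so by \eqref{cycle X, punto base su C_1} $C_{0,x}$ has two components:
\begin{itemize}
\item[] $A_1=\{(x-q,0)\}\cong C_1\times\{0\}$ and $A_2=\{(x-q_1)\}\times (C_2-q_2)$.
\end{itemize}
Their negatives are $-A_1=(C_1-x)\times\{0\}$ and $-A_2=\{q_1-x\}\times(q_2-C_2)$.

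\emph{Central fibre, components of $W_0$.} The base point $y=\beta_2(0)$ lies on $C_2$, so by \eqref{cycle X} $C_{0,y}$ has two components:
\begin{itemize}
\item[] $B_1=(C_1-q_1)\times\{q_2-y\}$ and $B_2=\{0\}\times(C_2-y)$.
\end{itemize}
Their negatives are $-B_1$ and $-B_2$.

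\emph{Pairwise intersections.} I would go through the sixteen pairs. Each pairing of a ``horizontal'' curve $\{\text{pt}\}\times(\text{curve in }JC_2)$ with a ``vertical'' curve $(\text{curve in }JC_1)\times\{\text{pt}\}$ meets in at most one point. Non-intersection then reduces to explicit point conditions, for instance:
\begin{itemize}
\item[] $A_1\cap B_2\neq\emptyset$ iff $0\in C_2-y$, i.e. never, since $y\in C_2$ gives $0\in C_2-y$. This pair needs care; see below.
\item[] $A_1\cap B_1\neq\emptyset$ iff $q_2-y=0$, i.e. $y=q_2$.
\item[] $A_2\cap B_2\neq\emptyset$ iff $x=q_1$.
\item[] Pairs of the same type, e.g. $A_1$ and $-B_1$, are both vertical. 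They meet only if their second coordinates agree, i.e. $0=y-q_2$.
\end{itemize}
So most conditions are of the form $x\neq q_1$, $y\neq q_2$, $x-q_1\notin\{0\}\cup(\text{torsion-type loci})$, plus $y$ avoiding a finite set or $x$ avoiding finitely many points on the elliptic curve.

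\emph{Main obstacle.} The delicate pairs involve the point $(0,0)$ and its neighbours. For example, $A_1\ni(0,0)$ (take $q=x$) and $B_2\ni(0,0)$ (take the point $y$), so $C_{0,x}$ and $C_{0,y}$ both pass through the origin. This cannot be avoided by the naive constructions, so I would compare carefully with the orientation conventions in \eqref{cycle X}. If the origin really is a common point, the fix is to translate one cycle by a general point $a\in JC_0$. Translation preserves the homology class and the Ceresa nature of the cycle, and one can equally choose the family $W_t$ translated by a section $a_t$ with $a_0=a$. Then a dimension count shows that curves in an abelian $3$-fold (here $1+2$) are generically disjoint after translation. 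With that adjustment, all remaining conditions are finitely many open conditions on the choices of $x,y$ (and $a$), so a general choice gives $|Z_0|\cap|W_0|=\emptyset$. Openness of disjointness then propagates it to small $t$.
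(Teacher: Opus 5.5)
\textbf{There is a genuine gap, and it is in the smooth-fibre step you took over from Proposition \ref{prop:5.5}.}

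The locus in \eqref{notin} contains $C_t$ itself: take $x=b_{1,t}$. So no section $\beta_2$ can avoid it. The dimension count compares a surface with the threefold $JC_t$, but $b_{2,t}$ only moves on the curve $C_t$, which lies inside that surface.

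Concretely, every Abel--Jacobi curve $C_{t,b}$ and its image under $[-1]$ pass through $0=\caO_{C_t}(b-b)$. Moreover, $b_{2,t}-b_{1,t}$ lies on both $C_{t,b_{1,t}}$ and $[-1]^*C_{t,b_{2,t}}$. Hence, whenever $Z_t,W_t\neq 0$ (e.g.\ $C_t$ non-hyperelliptic), $\{0,\pm(b_{2,t}-b_{1,t})\}\subset|Z_t|\cap|W_t|$ for every choice of sections. The origin problem you met at $t=0$ is therefore not a central-fibre artefact or a sign-convention issue. The cycles defined in \eqref{ceresas smooth} are never disjoint, so the statement is false as written. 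The paper's own proof refers back to the count of Proposition \ref{prop:5.5} and to ``the choice of base points'', so it has exactly the same defect.

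\textbf{Your central-fibre analysis also puts the obstruction in the wrong place.}

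Because $C_1\to JC_1$ is an isomorphism, $A_1=-A_1=JC_1\times\{0\}$. These components cancel in $Z_0$, so the origin is not in $|Z_0|$ once $x\neq q_1$. (Also, for $A_1\cap B_2$ you wrote ``never'' where you mean ``always''.)

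For the same reason, every vertical component is a full $JC_1\times\{\mathrm{pt}\}$. So a horizontal--vertical pair meets exactly when a point of $JC_2$ lies on a curve in $JC_2$. For $A_2$ and $-B_1=JC_1\times\{y-q_2\}$ this always happens: taking $q=y$ gives $(x-q_1,y-q_2)\in A_2\cap(-B_1)$. So it is not true that only open conditions on $x,y$ remain. For general $x,y$ (no further cancellation), $|Z_0|\cap|W_0|\neq\emptyset$.

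\textbf{Your translation repair is sound, but it proves a modified statement.}

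Replace $W_t$ by its translate $W_t+a_t$ along a general holomorphic section $a_t$ of the relative Jacobian, which is an abelian scheme because $C_0$ is of compact type. Disjointness on a fibre then means $a_t\notin|Z_t|-|W_t|$. This difference set has dimension at most $2$ in $JC_t$, and that is where the dimension count belongs, not on $b_{2,t}$. Choose $a_0$ off the difference set of the limits of the supports (taken before cancellation). Then properness gives disjointness after shrinking $\Delta$.

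The translated cycle is still homologically trivial. However, it is no longer of the form $C'-[-1]^*C'$, since translation does not commute with $[-1]$. So it is not the $W_t$ of the proposition, and any later argument that uses the specific shape of $W_t$ and $W_0$ would have to be rechecked.
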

\begin{proof}
	For $t\neq 0$, the proof is exactly similar to that of Proposition \ref{prop:5.5}, and for $t=0$ it follows from the choice of the base points for the Abel Jacobi embeddings.  
\end{proof}

To state our main result, let us first recall that, in our first degeneration, pulling the cycles $Z_0, W_0 $ in $X_0$ back to the normalization $\wt{X}_0$, we get two new cycles $\wt{Z}_0$ and $\wt{W}_0$ that still intersect properly, and are rationally equivalent to zero in $Z^{2}(\wt{X}_0)\otimes\Q$. Since $\wt{X}_0$ is smooth and projective of dimension three, we get another biextension mixed Hodge structure
\begin{equation}\label{biext norm}
	B_{\widetilde{Z}_0, \widetilde{W}_0}=H^{3}(\wt X_0\setminus \vert \wt Z_0\vert , \vert \wt W_0\vert ;\Q(2)),
\end{equation}
with associated height $\Ht(B_{\widetilde{Z}_0, \widetilde{W}_0})$. In the case of our second degeneration, $X_0$ is smooth, and no normalization is necessary. The rest of the section is devoted to the following theorem:
\begin{thm}\label{thm:4-4}
	Let $\mathcal{C}\to \Delta$ be a family of genus three curves with central fibre $C_0$ of the two types discussed above. Let $\caZ=\{Z_{t}\}_{t\in \Delta^{\ast}}$ and $\caW=\{W_{t}\}_{t\in \Delta^{\ast}}$ be two families of Ceresa cycles obtained by choosing the base points $b_{i, t}, i=1,2$ as above. Then, for a fixed choice of coordinate (assumption \ref{assum:2.2}), the limit height $H(N,F_{\infty},W)$ associated with the biextension variation $B_{Z_t,W_t}$ is given by the Deligne splitting of $B_{\wt Z_0, \wt W_0}$. Consequently, we have $H(N,F_{\infty}, W)=\Ht(B_{\widetilde{Z}_0, \widetilde{W}_0})$.
\end{thm}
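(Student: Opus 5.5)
The plan is to produce a morphism of mixed Hodge structures between the biextension $B_{\wt Z_0,\wt W_0}$ and a sub-quotient of the limit $\caV_{\lim}$ of $\caV_t=H^{3}(JC_t\setminus|Z_t|,|W_t|;\Q(2))$. This morphism should respect the canonical generators $1$ of $\Gr^W_0=\Q(0)$ and $1^{\vee}$ of $\Gr^W_{-2}=\Q(1)$. We then compare Deligne splittings via Lemma \ref{lem:2.5}, exactly as in Proposition \ref{prop:2.13}, but with $\delta_{M,-2}$ in place of $\delta_{B,-2}$.

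We treat the reducible case first, since it is easier. There $N=0$ on $H^3_{\lim}(JC_t)$, which is pure and equal to $H^3(JC_1\times JC_2)$. The cycles $Z_0,W_0$ are disjoint and homologous to zero by Propositions \ref{prop:5.4} and \ref{prop:5.6}. The family $(JC_t\setminus|Z_t|,|W_t|)$ therefore degenerates to a pair in the smooth projective $JC_0$. Specialization (the Clemens--Schmidt map $i^*$ of Theorem \ref{thm:3.1}, applied to the total space of the relative family) gives a morphism $B_{Z_0,W_0}\to\caV_{\lim}$. This morphism is an isomorphism on $\Gr^W_{-1}$ and matches the classes $\{Z\}\mapsto\{Z_0\}$ and the $\Q(1)$ generators coming from $|W|$. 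The monodromy $N$ is then zero on the $\Q(0)$ and $\Q(1)$ parts as well, so $M=W$ and $\caV_{\lim}\cong B_{Z_0,W_0}$. The identity \eqref{eqref:3.2-1} then reads $H(N,F_\infty,W)=\Ht(B_{Z_0,W_0})$ by Theorem \ref{height-bextn}.

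For the irreducible nodal case, I would proceed in four steps.

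First, I would use Assumption \ref{assum:2.2} to get that $H^1_{\lim}(C_t)$, and hence $H^3_{\lim}(JC_t)=\wedge^3H^1_{\lim}$, is split. The Deligne $\delta_M$ then has no component on $\Gr^W_{-1}\caV_{\lim}=H^3_{\lim}(JC_t;\Q(2))$. Moreover $M$ restricted there has graded pieces $H^{2}(J\wt C_0)$, $H^3(\wt X_0)$ and $H^2(J\wt C_0)(-1)$, suitably twisted.

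Second, I would use the Clemens--Schmidt sequence, after semistable base change, to identify $\ker N$ with $H^3(X_0)$. Combined with the relative and support sequences, this gives a morphism of mixed Hodge structures $H^3(X_0\setminus|Z_0|,|W_0|;\Q(2))\to\caV_{\lim}$. I would check injectivity in our situation; this is the ``injectivity of the Clemens--Schmidt morphism'' mentioned in the introduction.

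Third, I would use Proposition \ref{prop:4.4} (purity of $H^k_{|Z_0|}(X_0)$ and the kernel $\Q(0)\cdot\{Z_0\}$), together with the dual statement for $W_0$ via Remark \ref{rmk:4-5}. These show that the $\Q(0)$ and $\Q(1)$ pieces of $\caV_{\lim}$ are hit by the fundamental classes of $Z_0$ and $W_0$, and that they lie in $\ker N$. Consequently $N_{-2}=0$ and $\mu=0$. I would then pull back along the normalization $\nu\colon\wt X_0\to X_0$. This yields $\nu^*\colon H^3(X_0\setminus|Z_0|,|W_0|)\to B_{\wt Z_0,\wt W_0}$, which by \eqref{eq:1.2} kills exactly the weight-$2$ part $H^2(J\wt C_0)$ and sends $\{Z_0\}\mapsto\{\wt Z_0\}$ and $1^\vee\mapsto1^\vee$.

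Fourth, the resulting zigzag
\[
B_{\wt Z_0,\wt W_0}\longleftarrow H^3(X_0\setminus|Z_0|,|W_0|;\Q(2))\longrightarrow \caV_{\lim}
\]
consists of MHS morphisms preserving $1$ and $1^\vee$. Applying Lemma \ref{lem:2.5} to each arrow, exactly as in Proposition \ref{prop:2.13}, shows that the $\Q(0)\to\Q(1)$ components of the Deligne splittings agree. Equation \eqref{eqref:3.2-1} and Theorem \ref{height-bextn} then give $H(N,F_\infty,W)=\Ht(B_{\wt Z_0,\wt W_0})$.

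The main obstacle is that $\caV_{\lim}$ is not a biextension for $M$. The component $\delta_{M,-2}$ is defined relative to the grading $Y(N,F_\infty,W)$, whereas Lemma \ref{lem:2.5} only sees $\delta_M$ as a whole. One therefore has to show that the middle term in the zigzag carries $\Q(0)$ and $\Q(1)$ to $M$-graded pieces on which $Y(N,F_\infty,W)$ and $Y_{(\wt F_\infty,M)}$ are compatible. This is where the splitness from Assumption \ref{assum:2.2} is essential: it forces $\delta_{M,0}$ and $\delta_{M,-1}$ to vanish on the image, so that $\delta_M$ restricted to the image equals $\delta_{M,-2}$. I would try to prove this first by explicitly computing $M(N,W)$ from the $\wedge^3$ description together with the two support sequences.
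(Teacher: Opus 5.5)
\textbf{Where the proposal breaks.} Your architecture is the paper's. You use the relative Clemens--Schmid map $\iota\colon H^{3}(X_0\setminus|Z_0|,|W_0|;\Q(2))\hookrightarrow\caV_{\lim}$, Proposition~\ref{prop:4.4} to match the $\Q(0)$ of $\caV_{\lim}$ with $\{Z_0\}$ (and dually the $\Q(1)$ with $W_0$), the zigzag through $\nu^{*}$ (the paper's $f=\nu^{*}\circ\iota^{-1}$), Lemma~\ref{lem:2.5} with the argument of Proposition~\ref{prop:2.13}, and $M=W$ in the reducible case. The trouble is the step you single out as the crux.

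You propose to show that Assumption~\ref{assum:2.2} makes $\delta_{M,0}$ and $\delta_{M,-1}$ vanish on $\caQ=\im(\iota)$, so that $\delta_M|_{\caQ}=\delta_{M,-2}|_{\caQ}$. Splitness of $H^{3}_{\lim}(JC_t;\Q(2))$ only gives $\delta_{M,0}=0$. That component preserves the eigenspaces of $Y(N,F_\infty,W)$, and on $\Gr^W_{-1}\caV_{\lim}=H^{3}_{\lim}(JC_t;\Q(2))$ it is the Deligne splitting of a split structure. This is exactly how the paper uses the coordinate choice.

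The components $\delta^{A}_{M,-1}\colon\R(0)\to H^{2}(J\wt C_0;\R(2))$ and $\delta^{B}_{M,-1}\colon H^{2}(J\wt C_0;\R(1))\to\R(1)$ are different. They join different $W$-graded pieces, and they are invariants of the limiting Abel--Jacobi extensions of $Z_t$ and $W_t$, not of $H^3_{\lim}(JC_t)$. Concretely, Lemma~\ref{lem:2.5} gives $\delta_M|_{\caQ}=\delta_{\caQ}$. Now $\delta_{\caQ}$ sends a lift of $\{Z_0\}$ into $W_{-2}$, which is $\R(1)\oplus H^{2}(J\wt C_0;\R(2))$. Its second component measures how $\Q(0)$ extends the weight $-2$ part of $H^{3}(X_0;\Q(2))$ inside $H^{3}(X_0\setminus|Z_0|;\Q(2))$. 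Neither Assumption~\ref{assum:1.1} nor Assumption~\ref{assum:2.2} constrains it. The paper does not claim this vanishing; it explicitly keeps $\delta^{A}_{M,-1}$ and $\delta^{B}_{M,-1}$. So the sub-goal you plan to prove first is not supported by your hypotheses.

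\textbf{What is actually needed.} The requirement is weaker: only the $\R(0)\to\R(1)$ components must agree, $\delta^{-1,-1}_{\caQ}=\delta_{M,-2}$, which is what the paper asserts. To make this precise you must say which projection $W_{-2}\caQ\to\Q(1)$ defines $\delta^{-1,-1}_{\caQ}$. The natural one has kernel $\ker(\nu^{*})\cap W_{-2}$, since $\nu^{*}$ kills $H^{2}(J\wt C_0;\Q(2))$, maps $\Q(1)$ isomorphically, and is compatible with $f$. By contrast, $\delta_{M,-2}$ projects along the weight $-1$ eigenspace of $Y(N,F_\infty,W)$. So you need to check two things:
\begin{enumerate}
\item these two complements of $\Q(1)$ agree on $\iota(W_{-2})$;
\item replacing $\iota(\{Z_0\})$ by the $Y(N,F_\infty,W)$-lift of $1_t$ does not change the $\Q(1)$-component of $\delta_M$.
\end{enumerate}
Vanishing of $\delta_{M,-1}$ is not what is needed.

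\textbf{Side remark on $\mu$.} Your claim that $N_{-2}=0$ and $\mu=0$ is neither made nor used in the paper. It also does not follow merely from $\iota(\{Z_0\})\in\ker N$. $N_{-2}$ is evaluated on the $Y(N,F_\infty,W)$-lift of $1$, which differs from $\iota(\{Z_0\})$ by an element of $W_{-1}$. The image of that element under $N$ can have a nonzero $\Q(1)$-component unless you invoke further properties of Deligne's grading, such as the vanishing of $N_{-1}$.
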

\begin{proof}
	The key point is to construct (under the assumptions \ref{assum:1.1} and \ref{assum:2.2}) the limit mixed Hodge structure associated with the variation $\caV_{t}\coloneqq H^{3}(JC_t\setminus \vert Z_t\vert , \vert W_t\vert ;\Q(2))$ of biextension mixed Hodge structures. Even though conceptually it follows from the functorial properties of limit mixed Hodge structures, we provide the details for clarity. 
	
	We start with the case of the irreducible nodal central fiber $C_0$. First, let us recall the weight graded pieces of the limit mixed Hodge structure $H^{k}_{lim}(JC_t;\Q)$ for $k=3$, twisting everything by $\Q(2)$ (to bring the weights in the range $[-2,0]$):
	\begin{displaymath}
		\Gr_{k}^{M}H^{3}_{lim}(JC_t;\mathbb{Q}(2))=
		\begin{cases}
			H^{2}(J\widetilde C_0;\Q(1) ), &\text{ if }k=0,\\
			H^{3}(\widetilde{X}_0,\Q (2)),&\text{ if }k=-1,\\
			H^{2}(J\widetilde C_0,\Q(2))&\text{ if }k=-2,\\
			0,&\text{ otherwise}.
		\end{cases}
	\end{displaymath}
	To obtain the limit of the biextension variation $B_{Z_t, W_t}$, we need to understand the limit of the different cohomology groups appearing in diagram \ref{fig:height_diagram}. We first need to compute the limit of the Abel-Jacobi image of $Z_t$, resp. $W_t$. 
	Let us recall that the Abel-Jacobi image of $Z_t$ is given by the (twisted by $\Q(2)$) extension class
	\begin{equation}\label{eq:5.3}
		0 \ra H^3(JC_t, \mathbb{Q}(2))\ra H^{3}(JC_t\setminus \vert Z_t\vert ;\Q(2))\ra \Q(0)\ra 0.
	\end{equation}
	Since the limit mixed Hodge structure is functorial with respect to morphisms of admissible variations of mixed Hodge structures (see \cite[Lemma 5.26]{SZ:vmHsI} for the functoriality with respect to the limit Hodge filtration, while the one for monodromy weight filtration is a consequence of its existence), we obtain 
	\begin{equation}\label{eq:5.4}
		0 \ra H^3_{lim}(JC_t, \mathbb{Q}(2))\ra H^{3}_{lim}(JC_t\setminus \vert Z_t\vert ;\Q(2))\ra \Q(0)\ra 0,  
	\end{equation}
	which yields 
	\begin{displaymath}
		\Gr_{k}^{M}H^{3}_{lim}(JC_t\setminus \vert Z_t\vert ;\Q(2))=
		\begin{cases}
			\Q(0)\oplus H^{2}(J\widetilde C_0;\Q(1) ),&\text{ if }k=0,\\
			H^{3}(\widetilde{X}_0;\Q (2)),&\text{ if }k=-1,\\
			H^{2}(J\widetilde C_0;\Q(2)),&\text{ if }k=-2,\\
			0,&\text{ otherwise}.
		\end{cases}
	\end{displaymath}
	We now focus on the $\Q(0)$ term above, and we give an alternative interpretation. First, let us observe that
	\begin{align*}
		\Q(0)&=\ker\Big(H^{4}_{|Z_t|}(JC_t;\Q(2))\to H^{4}(JC_t;\Q(2))\Big)_{\lim}\\
		&\cong \ker\Big(H^{4}_{|Z_t|}(JC_t;\Q(2))_{\lim}\to H^{4}_{\lim}(JC_t;\Q(2))\Big).
	\end{align*}
	From Theorem \ref{thm:3.1}, one has morphisms $H^{4}(X_0;\Q(2))\to H^{4}_{\lim}(JC_t;\Q(2))$, resp. $H^{4}_{|Z_0|}(X_0;\Q(2))\to H^{4}_{|Z_t|}(JC_t;\Q(2))_{\lim}$, and a commutative diagram
	\[\xymatrix{
		H^{4}_{|Z_0|}(X_0;\Q(2))\ar[r]\ar[d] & H^{4}(X_0; \Q(2))\ar[d]\\
		H^{4}_{|Z_t|}(JC_t;\Q(2))_{\lim}\ar[r] & H^{4}_{\lim}(JC_t;\Q(2)).}\]
	By Proposition \ref{prop:4.4}, we obtain 
	\begin{displaymath}
		\ker\Big(H^{4}_{|Z_0|}(X_0;\Q(2))\to H^{4}(X_0; \Q(2))\Big)\cong \ker\Big(H^{4}_{|Z_t|}(J(C_t);\Q(2))_{\lim}\to H^{4}_{\lim}(J(C_t);\Q(2))\Big).
	\end{displaymath}
	In fact, using the normalization morphism $\nu\colon \wt X_0\to X_0$ we have the commutative diagram
	\[\xymatrix{
		H^{4}_{|Z_0|}(X_0;\Q(2))\ar[r]^{\{Z_0\}\mapsto 0}\ar[d] & H^{4}(X_0; \Q(2))\ar[d]^{\nu^{\ast}}\\
		H^{4}_{|\wt Z_0|}(\wt X_0;\Q(2))\ar[r]^{\{\wt Z_0\}\mapsto 0} & H^{4}(\wt X_0;\Q(2)).}\]
	Hence, we can identify
	\begin{align}\label{isomorphisms Q0}
		\Q(0)&=\ker\Big(H^{4}_{|Z_t|}(J(C_t);\Q(2))\to H^{4}(J(C_t);\Q(2))\Big)_{\lim}\\
		&\cong \ker\Big(H^{4}_{|Z_t|}(J(C_t);\Q(2))_{\lim}\to H^{4}_{\lim}(J(C_t);\Q(2))\Big)\notag\\
		&\cong \ker\Big(H^{4}_{|Z_0|}(X_0;\Q(2))\to H^{4}(X_0; \Q(2))\Big)=\langle \{Z_0\}\rangle\notag\\
		&\cong \ker\Big(H^{4}_{|\wt Z_0|}(\wt X_0;\Q(2))\to H^{4}(\wt X_0;\Q(2))\Big)=\langle \{\wt Z_0)\}\rangle.\notag
	\end{align}
	Also, it is evident that the unique generator $1_{t}\in \Q(0)$ maps to $\{Z_0\}$, which in turn maps to $\{\wt Z_0\}$ under $\nu^{\ast}$. The subscript $t$ reflects the choice of coordinate made in assumption \ref{assum:2.2}.
	
	Now, invoking the functoriality of limit mixed Hodge structures once again, we know that the duality isomorphism $H^{3}(JC_t\setminus \vert W_t\vert;\Q(2))^{\vee}\cong H^{3}(JC_t, \vert W_t\vert;\Q(2))$ carries over to the limit. Thus, we obtain the limit of the dual Abel-Jacobi image of $W_t$.
	\begin{equation}\label{eq:5.5}
		0\to \Q(1)\to H^{3}_{\lim}(JC_t, \vert W_t\vert ;\Q(2))\to H^3_{\lim}(JC_t;\mathbb{Q}(2))\to 0.
	\end{equation}
	Reasoning as before for $\Q(0)$, we obtain that the $\Q(1)$ in \eqref{eq:5.5} is generated by the dual of $\cl(W_0)$ (or dual of $\cl(\wt W_0)$ using $\nu$). Finally, combining equations \eqref{eq:5.4} and \eqref{eq:5.5}, we obtain the limit MHS for the biextension variation $B_{Z_t, W_t}=H^{3}(JC_t \smallsetminus \vert Z_t\vert , \vert W_t\vert ; \Q(2))$. Concretely, we get the limit version of diagram \ref{fig:height_diagram}:
	\[\xymatrix{& 0 & 0 & 0\\
		0\ar[r] & H^3_{\lim}(JC_t, \Q(2))\ar[u]\ar[r]
		& H^{3}_{\lim}(JC_t\setminus\vert Z_t\vert ;\Q(2))\ar[u]\ar[r]
		& \Q(0)\ar[u]\ar[r] & 0\\ 
		0\ar[r] & H^{3}_{\lim}(JC_t,\vert W_t\vert ;\Q(2))\ar[r] \ar[u]
		& H^{3}_{\lim}(JC_t\setminus\vert Z_t\vert ,\vert W_t\vert ;\Q(2)) \ar[r] \ar[u] & \Q(0)\ar[u]^{=}\ar[r] & 0\\
		0\ar[r] & \Q(1)\ar[r]^{=}\ar[u] & \Q(1)\ar[u]\\
		& 0\ar[u] & 0\ar[u]}   \]
	
	Recall the notation $\caV_{t}=H^{3}(JC_t\setminus\vert Z_t\vert ,\vert W_t\vert ;\Q(2))$. From the above diagram and the functoriality of mixed Hodge structures, it is easy to conclude that
	\begin{displaymath}
		\Gr_{k}^{M}\caV_{\lim}=
		\begin{cases}
			\Q(0)\oplus H^{2}(J\widetilde C_0;\Q(1) ), &\text{ if }k=0,\\
			H^{3}(\widetilde{X}_0;\Q (2)), &\text{ if }k=-1,\\
			\Q(1)\oplus H^{2}(J\widetilde C_0;\Q(2)), &\text{ if }k=-2,\\
			0,&\text{ otherwise}.
		\end{cases}
	\end{displaymath}
	Let $\delta_{M}$ be the Deligne delta associated with this limit mixed Hodge structure. From the shape of the limit mixed Hodge structure, $\delta_{M}\colon \Gr^{M}_{0}\caV_{\lim} \to \Gr^{M}_{-2}\caV_{\lim}$ with respect to the grading $Y_{(F_{\infty},M)}$. Now, let $Y=Y(N,F_{\infty},M)$ be the unique grading of the original filtration $W$, as described in subsection \ref{ss:2.3}. The component $\delta_{M,-2}$ modulo $\text{ad}(Y)$ is given by the component of $\delta_M$ that goes from $\R(0)$ to $\R(1)$ (cf. the discussion following Remark \ref{rmk:4.8}), while $\delta_{M,-1}=\delta^{A}_{M,-1}+\delta^{B}_{M,-1}$, where
	\begin{align*}
		&\delta^{A}_{M,-1}\colon \R(0)\to H^{2}(J\widetilde C_0;\R(2)),\\
		& \delta^{B}_{M,-1}\colon H^{2}(J\widetilde C_0;\R(1))\to \R(1). 
	\end{align*}
	Finally, for analogous reasons, $\delta_{M,0}$ is given only by
	\begin{displaymath}
		\delta_{M,0}\colon H^{2}(J\widetilde C_0;\R(1))\to H^{2}(J\widetilde C_0;\R(2)),
	\end{displaymath}
	which, by the choice of coordinate, is trivial. 
	
	By equation \eqref{eqref:3.2-1}, $\delta_{M,-2}$ defines the limit height $H(N,F_{\infty},W)$. Next, let us recall that, by Remark \ref{cicles on norm}, $\widetilde Z_0,$ and $\widetilde W_0$, are rationally trivial in $Z^{2}(\widetilde X_0)\otimes \Q$. This implies that the Abel-Jacobi class $ \operatorname{AJ}(\widetilde Z_0)\in JH^3(\widetilde X_0;\Q(2))$ is the splitting extension class
	\begin{equation}
		0 \to H^3(\widetilde X_0, \Q(2)) \to H^3(\widetilde X_0, \Q(2))\oplus \Q(0)\to\Q(0)\to 0. 
	\end{equation} 
	The same holds for $\operatorname{AJ}(\widetilde W_0)$ and its dual. Hence, diagram \ref{fig:height_diagram} simplifies, and the biextension MHS acquires the shape
	\begin{displaymath}
		B_{\widetilde{Z}_0, \widetilde{W}_0}=H^{3}(\wt X_0\smallsetminus \vert \wt Z_0\vert, \vert \wt W_0\vert;\Q(2))\cong \Q(0)\oplus H^3(\widetilde X_0, \Q(2))\oplus \Q(1)
	\end{displaymath}
	as a $\Q$-vector space, with only one non-trivial mixed Hodge component in $\Ext^1(\Q(0),\Q(1))\cong \C/(2\pi i)\Q$, and the corresponding height $\Ht(B_{\widetilde{Z}_0, \widetilde{W}_0})$.
	
	By Proposition \ref{prop:4-1}, the family of Ceresa cycles $Z_t$ extends to $Z_0$, which is homologous to zero in $H^4(X_0;\Q(2))$. Thus, the localization sequence together with Proposition \ref{prop:4.4} gives us the Abel Jacobi element $AJ(Z_0)$ as the class of the extension
	\begin{equation}\label{AJZ_0}
		0\to H^3(X_0; \Q(2)) \to H^3(X_0\smallsetminus \vert Z_0\vert;\Q(2) )\to \Q(0)\to 0.
	\end{equation}
	Using the functoriality of the graded pieces with respect to morphisms of mixed Hodge structures, and \eqref{eq:1.2}, we get the graded pieces of $H^3(X_0\smallsetminus \vert Z_0\vert;\Q(2) )$:
	\begin{displaymath}
		\Gr_{k}^{W}H^3(X_0\smallsetminus \vert Z_0\vert;\Q(2))=
		\begin{cases}
			\Q(0),&\text{ if }k=0,\\
			H^{3}(\wt X_0;\Q(2)),&\text{ if }k=-1,\\
			H^{2}(J\wt C_0;\Q(2)),&\text{ if }k=-2,\\
			0,&\text{ otherwise.}
		\end{cases}
	\end{displaymath}
	Similarly, $AJ(W_0)$ is given by the class of the extension:
	\begin{equation}\label{AJW_0}
		0\to H^3(X_0;\Q(2)) \to H^3(X_0\smallsetminus \vert W_0\vert;\Q(2))\to \Q(0)\to 0.
	\end{equation}
	Using Verdier duality, we get (after twisting by $\Q(1)$):
	\begin{displaymath}
		0\to \Q(1)\to H^{3}(X_0, |W_0|;\Q(2))\to H^{3}(X_0;\Q(2))\to 0.
	\end{displaymath}
	It follows that combining \eqref{AJZ_0} with the dual of \eqref{AJW_0}, we get the MHS $H^3(X_0 \smallsetminus \vert Z_0\vert , \vert W_0\vert ; \Q(2))$ and a diagram similar to diagram \ref{fig:height_diagram}, with the mixed Hodge structure $H^{3}(X_0;\Q(2))$ on the top left corner. Hence, we get the graded pieces
	\begin{displaymath}
		\Gr_{k}^{W}H^3(X_0\smallsetminus \vert Z_0\vert, \vert W_0\vert;\Q(2))=
		\begin{cases}
			\Q(0),&\text{ if }k=0,\\
			H^{3}(\wt X_0;\Q(2)),&\text{ if }k=-1,\\
			\Q(1)\oplus H^{2}(J\wt C_0;\Q(2)),&\text{ if }k=-2,\\
			0,&\text{ otherwise.}
		\end{cases}
	\end{displaymath}
	Using the relative version of Theorem \ref{thm:3.1}, we have a morphism of mixed Hodge structures 
	\begin{displaymath}
		\iota: H^3(X_0 \smallsetminus \vert Z_0\vert , \vert W_0\vert ; \Q(2)) \to H^3_{lim}(JC_t \smallsetminus \vert Z_t\vert , \vert W_t\vert ; \Q(2)).
	\end{displaymath}
	Indeed, the inclusion $H^{1}(C_0;\Q)\hookrightarrow H^{1}_{\lim}(C_t;\Q)$, and the fact that $H^{3}_{\lim}(JC_t;\Q)=\wedge H^{1}_{\lim}(C_t;\Q)$, shows that $H^{3}(X_0;\Q(2))\hookrightarrow H^{3}_{\lim}(JC_t;\Q(2))$. Further, using the five lemma, we obtain
	\begin{displaymath}
		H^{3}(X_0\setminus |Z_0|;\Q(2))\hookrightarrow H^{3}_{\lim}(JC_t\setminus |Z_t|;\Q(2)),
	\end{displaymath}
	and using (relevant versions of) diagram \ref{fig:height_diagram}, we conclude that $\iota$ is also injective. Let us denote $\caQ=\im(\iota)\subset \caV_{\lim}$. We note that $\Gr_{k}\iota$ induces isomorphism for $k=-1, -2$, while $\Gr_{0}\iota$ is injective. By \eqref{isomorphisms Q0}, it is also evident that $\Gr_{0}\iota$ maps $\{Z_0\}$ to the canonical Betti generator $1_{t}\in \Q(0)=\Gr^{M}_0\caV_{\lim}$. 
	
	Now, let $\delta_{\caQ}$ be the Deligne splitting associated with $\caQ$ (which, using $\iota$, we identify with that of $H^3(X_0 \smallsetminus \vert Z_0\vert , \vert W_0\vert ; \Q(2))$). By Lemma \ref{lem:2.5}, we have that \begin{equation}\label{equality delta in Q}
		\delta_{M}|_{\caQ}=\delta_{\caQ}.
	\end{equation}Therefore, if we restrict to the component $\delta^{-1,-1}_{\caQ}\colon \R(0)\to \R(1)$ of $\delta_{\caQ}$ and to $\delta_{M,-2}$ of $\delta_M$, we read the condition \eqref{equality delta in Q}, as  $\delta^{-1,-1}_{\caQ}=\delta_{M,-2}$. Hence, we can restate the limit height formula as
	\begin{displaymath}
		\delta^{-1,-1}_{\caQ}(1_t)=\frac{1}{2\pi}H(N,F_{\infty},W)1^{\vee}_{t},
	\end{displaymath}
	where, as before $1_t\in \Q(0)$ and $1^{\vee}_t\in \Q(1)$ are the canonical Betti generators. Now, define
	\begin{displaymath}
		f\coloneqq\nu^{\ast}\circ \iota^{-1}\colon \caQ\to B_{\wt Z_0, \wt W_0},
	\end{displaymath}
	as a morphism of mixed Hodge structures. It is easy to see that $f(1_t)=1_{B_{\wt Z_0, \wt W_{0}}}$ and $f(1^{\vee}_t)= 1^{\vee}_{B_{\wt Z_0, \wt W_0}}$, where we recall that $1_{B_{\wt Z_0, \wt W_{0}}}$ is given by $\{\wt Z_0\}$, while $1^{\vee}_{B_{\wt Z_0, \wt W_0}}$ is given by the dual of $\{\wt W_0\}$. Since $f$ is a morphism of MHS, by Lemma \ref {lem:2.5} we have $f\circ \delta_{\caQ}=\delta_{B_{\wt Z_0,\wt W_0}}\circ f$. Notice that since $B_{\wt Z_0, \wt W_0}$ is a biextension MHS, we have $\delta_{B_{\wt Z_0,\wt W_0}}=\delta^{-1,-1}_{B_{\wt Z_0,\wt W_0}}$. On the other hand, $\caQ$ is not a biextension mixed Hodge structure. But from the shape of its graded components we can conclude once again, using Lemma \ref{lem:2.5}, and strictness of $f$, that $f\circ \delta^{-1,-1}_{\caQ}=\delta^{-1,-1}_{B_{\wt Z_0, \wt W_0}}\circ f$. Now we can invoke the argument of Proposition \ref{prop:2.13} in this situation to conclude
	\begin{displaymath}
		\delta^{-1,-1}_{B_{\wt Z_0, \wt W_0}}(1_{B_{\wt Z_0, \wt W_{0}}})=\frac{1}{2\pi}H(N,F_{\infty},W)1^{\vee}_{B_{\wt Z_0, \wt W_0}}.
	\end{displaymath}
	As an immediate consequence, we get $H(N,F_{\infty}, W)=\Ht(B_{\wt Z_0, \wt W_0})$. 
	
	The case when $C_0=C_1\cup C_2$ is reducible follows the same set of ideas. In this scenario, we have $H^{1}_{\lim}(C_t;\Q)\cong H^{1}(C_1;\Q)\oplus H^{1}(C_2;\Q)$, which is a pure Hodge structure and is automatically split. Hence, unlike the previous scenario, a choice of a coordinate cannot be made at this stage. In this case, the biextension variation $B_{Z_t,W_t}=H^{3}(J(C_t)\setminus |Z_t|, |W_t|;\Q(2))$ extends to a biextension mixed Hodge structure $B_{Z_0,W_0}=H^{3}(X_0\smallsetminus |Z_0|, |W_0|;\Q(2))$, where $X_0=JC_1\times JC_2$, and $M=W$. Notice that what we really have is the following: For any coordinate $t\in \Delta$, the limit of the biextension variation $B_{Z_t,W_t}$ is isomorphic to $B_{Z_0,W_0}$. However, this isomorphism is not fixed and depends on $t$. In this case, one can choose a coordinate $t$ in such a way that (as before) $1_t\mapsto \{Z_0\}$. In any case, we have that $H(N,F,W)$ is given by the Deligne $\delta$ corresponding to $B_{Z_0,W_0}$. Notice that, in this case, $Z_0$ and $W_0$ are homologous to zero, but may not be rationally equivalent to zero (Proposition \ref{prop:5.4}). 
\end{proof}
We end with a small remark.
\begin{rmk}
	Even though the cycles $\wt Z_0$ and $\wt W_0$ are rationally equivalent to zero (after tensoring with $\Q$), the height $\Ht(B_{\wt Z_0, \wt W_0})$ could be non-zero. In fact, we can get a formula for the height in terms of logarithms. Since $\wt Z_0$ is rationally equivalent to zero (up to torsion), we know that there exist finitely many divisors $Y_i$, and rational functions $f_i\in \C(Y_i)^{\ast}$, such that $\wt Z_0=\Sigma_i\Div(f_i)$. One can also arrange the $Y_i$-s to intersect $\wt W_0$ properly. Let $Y_i\cdot \wt W_0=\Sigma_{j} n_{ij}p_{ij}$, for each $Y_i$. Then, following Remark \ref{rmk:4.6}, one can show that $\Ht(B_{\wt Z_0,\wt W_0})$ is given by $\Sigma_{i,j}n_{ij}\log|f_{i}(p_{ij})|$.
\end{rmk}

\bibliographystyle{amsalpha}

\end{document}